\documentclass{article}
\usepackage{amssymb,amsmath,amsthm,verbatim}
\usepackage{ifthen}

\newtheorem{theorem}{Theorem}
\newtheorem{proposition}[theorem]{Proposition}
\newtheorem{lemma}[theorem]{Lemma}

\newtheorem{claim}[theorem]{Claim}
\newtheorem{corollary}[theorem]{Corollary}
\newtheorem{conjecture}{Conjecture}

\theoremstyle{definition}
\newtheorem{definition}{Definition}
\newtheorem{notation}[definition]{Notation}

\theoremstyle{remark}
\newtheorem{remark}{Remark}

\newcommand{\mythmname}{}
\newtheoremstyle{mytheorem}
	{3pt}
	{3pt}
	{\it}
	{}
	{}
	{{\bf .}}
	{.5em}
	{\mythmname{\ifthenelse{ \equal{#3}{} }{}{\ (\thmnote{#3})}}}
\theoremstyle{mytheorem}


\newcommand{\tensor}{\otimes}

\newcommand{\del}[2]{\ensuremath{\frac{\partial #1}{\partial #2}}}

\newcommand{\delz}{\del{}{z}}

\newcommand{\delu}{\del{}{u}}

\newcommand{\GA}{{\rm GA}}
\newcommand{\MA}{{\rm MA}}
\newcommand{\EA}{{\rm EA}}
\newcommand{\TA}{{\rm TA}}

\newcommand{\GL}{{\rm GL}}

\newcommand{\Venereau}{V\'en\'ereau }
\newcommand{\Vtype}{V\'en\'ereau-type }

\DeclareMathOperator{\Aut}{Aut}
\DeclareMathOperator{\End}{End}

\DeclareMathOperator{\Spec}{Spec}

\newcommand{\IC}{\mathbb{C}}

\newcommand{\IN}{\mathbb{N}}
\newcommand{\IQ}{\mathbb{Q}}

\newcommand{\IZ}{\mathbb{Z}}

\usepackage{diagrams}

\pdfpagewidth 8.5in
\pdfpageheight 11in
\setlength\topmargin{0in}
\setlength\headheight{0in}
\setlength\headsep{0in}
\setlength\textheight{9in}
\setlength\textwidth{6.5in}
\setlength\oddsidemargin{0in}
\setlength\evensidemargin{0in}

\begin{document}

\title{\Vtype polynomials as potential counterexamples}

\author{Drew Lewis\thanks{ {\em Current affiliation}: University of Alabama, amlewis@as.ua.edu}\\ Washington University in St. Louis \\ andrew@math.wustl.edu}

\maketitle
\begin{abstract}
We study some properties of the \Venereau polynomials $b_m=y+x^m(xz+y(yu+z^2)) \in \IC[x,y,z,u]$, a sequence of proposed counterexamples to the Abhyankar-Sathaye embedding conjecture and the Dolgachev-Weisfeiler conjecture.  It is well known that these are hyperplanes and residual coordinates, and for $m \geq 3$, they are $\IC[x]$-coordinates.  For $m=1,2$, it is only known that they are 1-stable $\IC[x]$-coordinates.  We show that $b_2$ is in fact a $\IC[x]$-coordinate.  We introduce the notion of \Vtype polynomials, and show that these are all hyperplanes and residual coordinates.  We show that some of these \Vtype polynomials are in fact $\IC[x]$-coordinates; the rest remain potential counterexamples to the aforementioned conjectures.  For those that we show to be coordinates, we also show that any automorphism with one of them as a component is stably tame.  The remainder are stably tame, 1-stable $\IC[x]$-coordinates.
\end{abstract}



\section{Introduction}

Let $A$ (and all other rings) be a commutative ring with one throughout.  An {\em $A$-coordinate} (if $A$ is understood, we simply say {\em coordinate}; some authors prefer the term {\em variable}) is a polynomial $f \in A^{[n]}$ for which there exist $f_2,\ldots f_n \in A^{[n]}$ such that $A[f,f_2,\ldots,f_n]=A^{[n]}$.  It is natural to ask when a polynomial is a coordinate; this question is extremely deep and has been studied for some time.  There are several conjectures concerning the identification of coordinates.  First, the well-known Embedding Conjecture asserts, in short, that hyperplanes are coordinates:

\begin{conjecture}\emph{(Abhyankar-Sathaye)} \label{AS}
Let $A$ be a $\IQ$-algebra and $f \in A^{[n]}$.  If $A^{[n]}/(f) \cong A^{[n-1]}$, then $f$ is a coordinate of $A^{[n]}$.
\end{conjecture}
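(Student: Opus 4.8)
Because the statement is the full Abhyankar--Sathaye conjecture, which remains open for $n \ge 3$, what I can offer is a strategy organized by induction on $n$ rather than a complete proof. For $n=1$ the result is elementary: if $A^{[1]}/(f) \cong A$ as $A$-algebras then, working fibrewise over $\Spec A$ and using that $k[t]$ is a PID for a field $k$, one shows $f = ut + b$ with $u \in A^{\times}$ and $b \in A$, which is manifestly a coordinate. For $n=2$ and $A = k$ a field of characteristic zero, the statement is the Abhyankar--Moh--Suzuki theorem; I would simply invoke it, since its proof --- a Newton-polygon analysis of a polynomial parametrization of the curve $\{f=0\}$ showing that the bidegree can always be lowered by a tame automorphism of $k^{[2]}$ --- is not something I would try to reconstruct, and the passage from $A=k$ to a general $\IQ$-algebra in dimension $2$ is a gluing/descent argument. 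So all the genuine difficulty is concentrated in $n \ge 3$.

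For $n \ge 3$ my plan has two reduction steps. First, reduce the coefficient ring: by faithfully flat descent together with a patching argument it ought to be enough to produce the complementary coordinates $f_2,\dots,f_n$ after a faithfully flat base change, which heuristically brings one down to $A$ local, then $A$ a field, and finally to $f \in k^{[n]}$. Second --- and this is where the methods surrounding the \Venereau and \Vtype polynomials live --- one tries to build a global coordinate out of local data: such an $f$ is a coordinate after inverting one variable $x$ and after reducing modulo $x$, and one would like to glue a ``coordinate on $x \ne 0$'' to a ``coordinate at $x=0$''. The natural machinery for the gluing is the theory of locally nilpotent derivations: one searches for an $\LND$ of $k[x]^{[n-1]}$, or a composition of exponentials of such, whose flow carries the hyperplane $f$ to a linear coordinate, i.e.\ one tries to realize the required change of variables inside $\Aut\bigl(k[x]^{[n-1]}\bigr)$ and, if at all possible, inside its tame subgroup --- that last refinement being exactly what would upgrade the conclusion to ``stably tame'' and to the assertions about automorphisms having $f$ as a component.

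The main obstacle is that neither reduction is known to close. Faithfully flat descent produces the $f_i$ over the large ring but not obviously over $A$ itself; extracting them over $A$ is a cancellation-type problem closely tied to Zariski cancellation, which is false in positive characteristic and open in characteristic zero for $n \ge 3$. And even over a field, the step ``a hyperplane that becomes a coordinate both modulo $x$ and after inverting $x$ is already a coordinate'' is precisely what is not known: the \Venereau polynomials $f_1$ and $f_2$ are the smallest test cases, and only $f_2$ is brought under control in this paper, by an explicit construction rather than a general principle. Consequently I would expect this plan to succeed unconditionally only for $n \le 2$, and for $n \ge 3$ only on special families --- such as the \Vtype polynomials treated here --- where one can exhibit by hand a suitable automorphism, an $\LND$ with a slice, or a decisive degree estimate. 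A proof for all $n$ would require a genuinely new ingredient, most plausibly a higher-dimensional analogue of the Abhyankar--Moh degree-reduction machinery or a positive solution to the pertinent cancellation problem; that is the step I do not see how to carry out, and it is why the statement remains a conjecture.
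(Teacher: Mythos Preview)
The statement you were asked to prove is labeled \emph{Conjecture} in the paper, and the paper offers no proof of it --- it is stated precisely as an open problem, with the remark that it is known only for $n=2$ with $A$ a field. So there is no ``paper's own proof'' to compare against; your assessment that the Abhyankar--Sathaye conjecture remains open for $n\ge 3$ is exactly the position the paper takes.

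Your proposal is therefore not wrong so much as correctly scoped: you identify the trivial case $n=1$, invoke Abhyankar--Moh--Suzuki for $n=2$ over a field, and then explain candidly why the general case resists the natural reduction strategies. That narrative is consistent with the paper's own framing, including the role of the \Venereau polynomials as test cases for Conjectures~\ref{AS}, \ref{stable}, and~\ref{residual}, and the paper's resolution of $f_2$ by an explicit construction rather than by any general principle. There is nothing to correct here beyond noting that no proof was expected or possible.
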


This is known only for $n=2$ with $A$ a polynomial ring over a field (\cite{AM}, \cite{Suzuki}, \cite{RusSathaye}).  We remark that the condition of $A$ being a $\IQ$-algebra is necessary, even with $n=2$.

If we restrict our attention to $A=\IC^{[r]}$, a special case of the Dolgachev-Weisfeiler Conjecture asserts that residual coordinates are coordinates:

\begin{conjecture}\label{DW}
Suppose $A=\IC^{[r]}$, and let $f \in A^{[n]}$.  If $A[f] \hookrightarrow A^{[n]}$ is an affine fibration, then $f$ is an $A$-coordinate.
\end{conjecture}

In studying these kinds of problems, \Venereau \cite{Vthesis} and Berson \cite{BThesis} independently wrote down a polynomial in four variables, now known as the \Venereau polynomial, that satisfies the hypotheses of these two conjectures (with $A=\IC[x]$).  It is often given as the first polynomial in the sequence
\begin{equation}\label{Vpoly}
b_m := y+x^m(xz+y(yu+z^2))
\end{equation}
\Venereau was able to show that $b_m$ is a coordinate for $m \geq 3$, but the $m=1$ and $m=2$ cases proved elusive.  Subsequently, many coordinate-like properties were shown about $b_1$ and $b_2$ ( e.g. \cite{EMV}, \cite{Freud}, \cite{KZPoly}).

We show that the second \Venereau polynomial $b_2$ is a coordinate.  We go on to highlight a key difference between $b_1$ and $b_2$.  In the process, we describe a more general class of polynomials we term {\em \Vtype polynomials}.  These are all hyperplanes and strongly residual coordinates; we show many of them to be coordinates, but are left with a class of potential counterexamples (including $b_1$) to the above conjectures.  We go on to show that every coordinate-like property known to us to hold for $b_1$ also holds for any \Vtype polynomial.

\section{Preliminaries}
For a ring $A$, we will adopt the standard notation of using $\GA_n(A)$ to denote the general automorphism group $\Aut _{\Spec A} \Spec A^{[n]}$, and $\MA_n(A)$ for the endomorphism group $\End _{\Spec A} \Spec A^{[n]}$.  Note that $\GA_n(A)$ is naturally anti-isomorphic to $\Aut _A A^{[n]}$ (in fact, some authors choose to define it as such).  $\EA_n(A)$ denotes the elementary group generated by automorphisms fixing $n-1$ variables, and $\TA_n(A):=\left<\EA_n(A),\GL_n(A)\right>$ is called the tame subgroup.  Given $\phi \in \GA_n(A)$, we will use $J\phi$ for the Jacobian determinant.  We also require the following definitions:
\begin{definition}Let $A$ be a ring and $f \in A^{[n]}$.
\begin{enumerate}
\item $f$ is a {\em hyperplane} if $A^{[n]}/(f) \cong A^{[n-1]}$.
\item $f$ is a {\em residual coordinate} if $A[f] \hookrightarrow A^{[n]}$ is an affine fibration; that is, if $A[f] \hookrightarrow A^{[n]}$ is flat and $A^{[n]} \tensor _{A[f]} \kappa (\mathfrak{p}) \cong \kappa (\mathfrak{p}) ^{[n-1]}$ for all $\mathfrak{p} \in \Spec A[f]$.
\item Let $x \in A \setminus A^*$ be a non-zero divisor.  $f$ is a {\em strongly $x$-residual coordinate} if $\bar{f}$ is a coordinate in $(A/xA)^{[n]}$ and $f$ is a coordinate in the localization $(A_x) ^{[n]}$.  If $x$ is clear from context, we may simply say {\em strongly residual coordinate}.
\item $f$ is an {\em $r$-stable coordinate} (or just {\em stable coordinate}) if $f$ is a coordinate in $A^{[n+r]}$ for some $r \in \IN$.
\end{enumerate}
\end{definition}

We are primarily interested in the case $A=\IC^{[r]}$, in which case one easily checks that strongly residual coordinates are indeed residual coordinates.  As observed in \cite{FreudDaigle}, a result of Asanuma \cite{Asanuma} combined with the Quillen-Suslin Theorem yields

\begin{theorem}\label{Asanuma}Let $A=\IC^{[r]}$ for some $r \in \IN$.  Then every residual coordinate (and hence, every strongly residual coordinate) is a stable coordinate.
\end{theorem}

A recent result in this same sprit that we will find useful is due to Berson, van den Essen, and Wright (a special case of Theorem 4.5 from \cite{BEW})
\begin{theorem}\label{BEWtheorem}
Let $\phi \in \GA_n(\IC[x])$ with $J\phi=1$.  If $\phi \in \TA_n(\IC[x,x^{-1}]) $, and $\bar{\phi} \in \EA_n(\IC)$ (where $\bar{\phi}$ denotes the image modulo $x$), then $\phi$ is stably tame.
\end{theorem}

We also make use of the following well known result, a special case of the overring principle (\cite{Arnobook} Proposition 1.1.7, for example):
\begin{theorem}\label{overring}
Let $A$ be a reduced ring and $x \in A$ a non-zero divisor.  Let $\phi \in \MA_n(A) \subset \MA_n(A_x)$ with $J\phi \in A^*$.  Then $\phi \in \GA_n(A)$ if and only if $\phi \in \GA_n(A_x)$
\end{theorem}
One can produce many exotic objects from this kind of construction.  The famous Nagata map, perhaps the simplest example of a wild (i.e.\ not tame) automorphism, arises in this manner as an element of $\GA_2(\IC[x]) \subset \GA_3(\IC)$:
\begin{align}
\sigma &= \left(x,y+x(xz-y^2),z+2y(xz-y^2)+x(xz-y^2)^2\right) \label{Nagata} \\
&=\left(x,y,z+\frac{y^2}{x}\right) \circ \left(x,y+x^2z,z\right) \circ \left(x,y,z-\frac{y^2}{x}\right) \notag
\end{align}

We use this approach to construct a variety of strongly residual coordinates.  Consider $\alpha \in \GA_n(\IC[x,x^{-1}][y])$, and let $Q \in \IC[x]^{[n]}=\IC[x][z_1,\ldots,z_n]$ such that $\alpha(Q) \in \IC[x,y]^{[n]}$.  Then $f:=y+x\alpha(Q)$ is a strongly $x$-residual coordinate; indeed, $\bar{f} \equiv y \pmod{x}$ is trivially a coordinate; and over $\IC[x,x^{-1}]$, we have $\phi(y)=f$ where $\phi \in \GA_{n+1}(\IC[x,x^{-1}])$ is given by
\begin{equation} \label{construction}
\phi = (y+xQ,z_1,\ldots,z_n) \circ \alpha
\end{equation}
Conjecture \ref{DW} asserts that these should all be coordinates.  If $Q \in x^m \IC[x]^{[n]}$ for a sufficiently large $m$, then we quickly see that $f$ is in fact a coordinate.

\begin{theorem}\label{bigm}
Let $\alpha =(f_1,\ldots,f_n) \in \GA_{n}(\IC[x,x^{-1}][y])$ and $Q \in \IC[x]^{[n]}$.  Then for all $m>>0$, $y+x^mQ(f_1,\ldots,f_n)$ is a $\IC[x]$-coordinate.  To be precise, write $\alpha^{-1}=(g_1,\ldots, g_n)$ and define, for $k>0$ and $1 \leq j \leq n$, 
\begin{align*}
q_{j,k} &= \min \{q\ |\ x^{qk} \del{ ^k g_j}{y^k} (y,f_1,\ldots,f_n) \in \IC[x,y]^{[n]} \}  
\end{align*}
Also define
\begin{align}
m_1 &:= \max _
{1 \leq j \leq n;\ 0<k} 
\{q_{j,k}\} \label{m1} \\ 
m_2 &:=\min \{m \in \IN\ |\ x^mQ(f_1,\ldots,f_n) \in \IC[x,y]^{[n]}\} \label{m2}
\end{align}
Then $y+x^mQ(f_1,\ldots,f_n)$ is a coordinate if $m \geq m_1+m_2$.
\end{theorem}
\begin{proof}
This is just an application of Taylor's formula.  Let $$\phi=\alpha ^{-1} \circ (y+x^mQ,z_1,\ldots,z_n) \circ \alpha$$ and compute 
\begin{align*}
\phi(z_j)&=g_j(y+x^mQ(f_1,\ldots,f_m),f_1,\ldots,f_n) \\
&=\sum _{k=0} \frac{1}{k!} \del{ ^k g_j}{ y^k}(y,f_1,\ldots,f_n)(x^mQ(f_1,\ldots,f_n))^k \\
&=z_j + \sum _{k=1} \frac{1}{k!} x^{k(m-m_2-m_1)} \left(x^{km_1}\del{^k g_j}{ y^k}(y,f_1,\ldots,f_n)\right)(x^{m_2}Q(f_1,\ldots,f_n))^k 
\end{align*}
It is now immediate from \eqref{m1} and \eqref{m2} that $\phi(z_j) \in \IC[x,y]^{[n]}$.  One quickly checks that $J\phi = 1$, so by Theorem \ref{overring}, $\phi \in \GA_n(\IC[x])$.
\end{proof}

\section{The \Venereau polynomials}
We start by defining a derivation on $\IC[y,z,u]$ by 
\begin{equation}\label{D}
D:=y\delz-2z\delu
\end{equation}
$D$ is triangular and thus locally nilpotent.  We also define 
\begin{align}\label{pvw}
p &:= yu+z^2 & v&:=xz+yp & w&:=x^2u-2xzp-yp^2
\end{align}
Note that $\ker D = \IC[y,p]$, and  $\exp(pD)\in \GA_3(\IC)$ is (up to a change of variables) the Nagata map \eqref{Nagata}.  By defining $Dx=0$, $D$ naturally extends to a derivation on $\IC[x]^{[3]}$ and thus $\IC[x,x^{-1}]^{[3]}$ with kernel $\IC[x,x^{-1}][y,p]$.  Thus we may consider $\psi := \exp(\frac{p}{x}D) \in \GA_3(\IC[x,x^{-1}])$.  One quickly computes
\begin{equation}\label{psi}
\psi = \left(y, \frac{v}{x}, \frac{w}{x^2}\right)
\end{equation}

We note that since $\psi$ fixes $p$, we quickly obtain the relation

\begin{equation}\label{x2p}
yw+v^2=x^2p
\end{equation}

As in \eqref{construction}, we thus see the \Venereau polynomials naturally as the $y$-component of
\begin{equation}
(y+x^{m+1}z,z,u) \circ \psi
\end{equation}
Applying Theorem \ref{bigm} in this case recovers V\'en\'ereau's result that $b_m$ is a coordinate for $m \geq 3$.  A slight variation of that technique shows that $b_2$ is a coordinate.
\begin{theorem}\label{f2}The second \Venereau polynomial $b_2=y+x^2(xz+y(yu+z^2))$ is a coordinate.
\end{theorem}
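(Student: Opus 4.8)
The plan is to follow the proof of Proposition \ref{f3} almost verbatim, but with the middle automorphism $(y+x^{n+1}z,z,u)$ replaced by a carefully chosen variant. Concretely, I would look for an automorphism $\nu\in\Aut_S S[y,z,u]$ whose first component is $y+x^3z$ and whose $z$- and $u$-components are obtained from those of $\mu_2:=(y+x^3z,z,u)$ by adding suitable correction terms, and then set $\theta:=\psi^{-1}\circ\nu\circ\psi$. Because $\psi$ fixes the first coordinate, so does $\psi^{-1}$, and hence the first component of $\theta$ is $y+x^2v=f_2$ — exactly as in Proposition \ref{f3}, and regardless of how the $z$- and $u$-components of $\nu$ are modified. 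Since $J\psi=J\psi^{-1}=1$, we have $J\theta=J\nu$, a unit. So the entire content of the theorem is the assertion that $\nu$ can be chosen so that $\theta\in\End_R R[y,z,u]$; granting that, the argument used in Proposition \ref{f3} ($\theta\in\End_R\cap\Aut_S$ with unit Jacobian) yields $\theta\in\Aut_R R[y,z,u]$, and then $R[f_2,\theta(z),\theta(u)]=R[y,z,u]$, i.e. $f_2$ is an $R$-coordinate.

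To see what the correction must do, recall that specializing the formula for $\theta_n$ in the proof of Proposition \ref{f3} to $n=2$ gives
\[
\theta_2=\Bigl(f_2,\; z-x^{-1}vwf_2-xpv,\; u+2x^{-1}vw\bigl(z-x^{-1}vwf_2-xpv\bigr)+x^{-2}v^2w^2f_2-p^2v\Bigr),
\]
so the only obstructions to $\theta_2$ being defined over $R$ are the term $x^{-1}vwf_2$ in the second component — which, using $yw+v^2=x^2p$ from \eqref{yvwrelation}, reduces modulo $R[y,z,u]$ to $x^{-1}v^3$, and hence to $x^{-1}y^3p^3$ — together with the corresponding $x^{-1}$ and $x^{-2}$ terms in the third component. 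The goal is to pick the corrections to the $z$- and $u$-components of $\mu_2$ so that, after conjugating by $\psi$, precisely these negative powers of $x$ are cancelled and nothing new with a negative power of $x$ is introduced.

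The delicate point — and where I expect the real work to lie — is that any correction to the $z$- or $u$-component of $\mu_2$ that actually depends on $z$ or $u$ itself acquires negative powers of $x$ after conjugation, since $\psi$ sends $z$ and $u$ to $v/x$ and $w/x^2$; meanwhile corrections depending on $y$ alone are harmless but demonstrably insufficient to kill the $x^{-1}y^3p^3$ obstruction. So the admissible corrections are quite constrained, and one must arrange that the newly created negative-power terms cancel the old ones term by term. Carrying this out is a direct computation in $S[y,z,u]$, using $v=xz+yp$, $w=x^2u-2xzp-yp^2$, and $yw+v^2=x^2p$ repeatedly; once the right tweak of the middle automorphism is identified, verifying $\theta\in\End_R R[y,z,u]$ is lengthy but entirely parallel to the $n\ge 3$ case of Proposition \ref{f3}, and the theorem follows.
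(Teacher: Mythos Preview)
Your strategy is exactly the paper's: replace the middle automorphism in the $\theta_n$ construction by a tweaked version and check that the conjugate by $\psi$ lands in $\End_R R[y,z,u]$. But your proposal stops precisely where the proof has to begin. You correctly identify the obstruction, correctly frame the constraints on admissible corrections, and then write ``once the right tweak of the middle automorphism is identified, verifying \ldots\ is lengthy but entirely parallel.'' Since the theorem \emph{is} the existence of such a tweak, this is not a sketch with routine details deferred; it is an outline with the one nontrivial step left blank.

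For the record, the paper's correction is disarmingly simple: take $\nu=(y+x^{3}z,\,z-\tfrac12 x^{3}u,\,u)$, so that
\[
\varphi_2:=\psi^{-1}\circ\nu\circ\psi=\psi^{-1}\bigl(f_2,\ \tfrac{v}{x}-\tfrac12 xw,\ \tfrac{w}{x^2}\bigr).
\]
The key computation is that $\varphi_2(p)=p+\tfrac14 x^{2}w^2$ (since $\psi$ fixes $p$, one just evaluates $(y+x^2v)\tfrac{w}{x^2}+(\tfrac{v}{x}-\tfrac12 xw)^2$), after which applying $\psi^{-1}$ gives expressions whose lowest $x$-power terms can be checked directly to lie in $R[y,z,u]$. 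Your obstruction analysis, which reduces the bad term to $x^{-1}y^3p^3$, is correct but somewhat misleading as a guide: it suggests looking for corrections built from $y$ and $p$, which you rightly note cannot work. The actual mechanism is different --- subtracting $\tfrac12 x^{3}u$ from $z$ perturbs $p$ by $\tfrac14 x^{2}w^2$ rather than by something with a negative $x$-power, and it is this control over $\varphi_2(p)$ that makes everything close for $n\ge 2$.
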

\begin{proof}[Abbreviated Proof]
Define $\varphi _m \in \GA_3(\IC[x,x^{-1}])$ by
\begin{equation}\label{varphi}
\varphi _m = \psi ^{-1} \circ (y,z-\frac{1}{2}x^{m+1}u,u) \circ (y+x^{m+1}z,z,u) \circ \psi
\end{equation}
One quickly checks that $\varphi _m (y)=b_m$ and $J\varphi _m=1$; so by Theorem \ref{overring}, all that is left to check is that for $m \geq 2$, $\varphi _m \in \MA_3(\IC[x])$.  This is straightforward to check and is shown in more generality below in Theorem \ref{coordinate}.
\end{proof}

\begin{remark}We can also see $\varphi _m$ as an exponential: namely, let $d$ be the jacobian derivation given by $d=J(p+\frac{1}{2}x^{m-2}vw,w,\cdot)$.  One can check that $d$ is locally nilpotent and $\varphi _m = \exp(\frac{1}{2}x^{m-1}d)$.
\end{remark}

Prior to our promised discussion of \Vtype polynomials, we would like to highlight a significant difference between $b_1$ and $b_2$.  For this, we define:
\begin{notation}
Let $\alpha \in \GA_n(\IC[x,x^{-1}])$.  Define $$\alpha ^*(\GA_n(\IC[x]))=\{\alpha^{-1} \circ \phi \circ \alpha\ |\ \phi \in \GA_n(\IC[x])\} \subset \GA_n(\IC[x,x^{-1}])$$
\end{notation}

The proof of Theorem \ref{f2} shows that $b_2$ is a coordinate of $\varphi _2 \in \psi ^*(\GA_3(\IC[x]))$.  More strongly, defining $\tilde{\psi} \in \GA_3(\IC[x,x^{-1}])$ by
\begin{equation}\label{psitilde}
\tilde{\psi} := (y,v,w) = (y,xz,x^2u) \circ \psi 
\end{equation}
we may even write
\begin{equation*}
\varphi _m = \tilde{\psi} \circ (y,z-\frac{1}{2}x^mu,u) \circ (y+x^mz,z,u) \circ \tilde{\psi}
\end{equation*}
and thus see $\varphi _2 \in \tilde{\psi} ^* (\GA_3(\IC[x]))$.  However, this is too much to ask of $b_1$.

\begin{theorem}\label{b1fail}
There is no automorphism $\phi \in \GA_3(\IC[x]) \cap \tilde{\psi} ^*(\GA_3(\IC[x]))$ with $\phi(y)=b_1$.
\end{theorem}
This follows from the following technical result, whose proof we momentarily defer:
\begin{theorem}\label{psistar}
Let $\alpha \in \GA_3(\IC[x])$.  
\begin{enumerate}
\item If $\tilde{\psi}^{-1} \circ \alpha \circ \tilde{\psi} \in \GA_3(\IC[x])$, then $\alpha(p) \in (p,x^2)$.
\item Suppose that $\alpha(y)\equiv y \pmod{x}$.  Then $\tilde{\psi}^{-1} \circ \alpha \circ \tilde{\psi} \in \GA_3(\IC[x])$ if and only if $\alpha(p)=p+x^2F+xpA+x^3B$ for some $A,B \in \IC[x]^{[3]}$ and $F \in \IC[y,p]$ such that $\alpha \equiv \exp(FD) \pmod{x}$ (recall $D=y\delz-2z\delu$ from (\ref{D})).
\item Suppose that $\alpha(y) \equiv y \pmod{x}$, $\phi = \tilde{\psi}^{-1} \circ \alpha \circ \tilde{\psi} \in \GA_3(\IC[x])$, and $\phi(p) \equiv p \pmod{x}$.  If $\alpha$ is stably tame over $\IC[x]$, then $\phi$ is stably tame over $\IC[x]$ as well.
\end{enumerate}
\end{theorem}
\begin{proof}[Proof of Theorem \ref{b1fail}]
Suppose $\phi \in \GA_3(\IC[x]) \cap \tilde{\psi} ^* (\GA_3(\IC[x]))$ and $\phi(y)=b_1$.  Then $\phi = \tilde{\psi} ^{-1} \circ \alpha \circ \tilde{\psi}$ for some $\alpha \in \GA_3(\IC[x])$.  In particular, since $\phi(y)=b_1$, we may write
\begin{equation}\label{alpha}
\alpha=\left(y+xz,\sum _{i=0} x^i G_i, \sum _{i=0} x^i H_i\right)
\end{equation}
for some $G_i, H_i \in \IC^{[3]}$.  By Theorem \ref{psistar} (2), we have $\alpha \equiv \exp(FD) \pmod{x}$ for some $F \in \IC[y,p]$.  Thus
\begin{align}\label{G0H0}
G_0 &= z+yF & H_0 &= u-2zF-yF^2
\end{align}
Write $\alpha(p)=\sum _{i=0} x^i P_i$ for some $P_i \in \IC^{[3]}$.  On the one hand, we can compute directly from (\ref{alpha})
\begin{align}
P_0&=yH_0+G_0^2 \notag\\
P_1 &= yH_1+zH_0+2G_0G_1 \label{p1}\\
P_2 &= yH_2+zH_1+2G_0G_2+G_1^2 \label{p2}
\end{align}
On the other hand, from Theorem \ref{psistar} (2), we must have $P_0=p$, $P_1 \in (p)$, and $P_2 \equiv F \pmod{p}$.  We will use these to derive a contradiction.  
\begin{claim} \label{G1}
$G_1 \equiv -\frac{1}{2}u \pmod{y,z}$.
\end{claim}
\begin{proof}
Since $P_1 \equiv 0 \pmod{p}$, we apply (\ref{p1}) and compute
\begin{align*}
0 & \equiv yH_1+zH_0+2G_0G_1 & \pmod{p} \\
&\equiv yH_1+z(u-2zF-yF^2)+2(z-yF)G_1 &\pmod{p} 
\end{align*}
with the second line following from (\ref{G0H0}).  Noting that $p=yu+z^2 \in (y,z^2)$, we may go modulo $(y,z^2)$ and obtain $0 \equiv z(u+2G_1) \pmod{y,z^2}$, hence $G_1 \equiv -\frac{1}{2}u \pmod{y,z}$.
\end{proof}
Now, since $P_2 \equiv F \pmod{p}$, we apply (\ref{p2}) and compute
\begin{align*}
F &\equiv yH_2+zH_1+2(z+yF)G_2+G_1^2 & \pmod{p} 
\end{align*}
Similar to above, since $p \in (y,z)$, we see $ F \equiv G_1 ^2 \pmod{y,z}$.  Applying Claim \ref{G1}, we then have 
\begin{align*}
F \equiv \frac{1}{4}u^2 & \pmod{y,z} 
\end{align*}
However, $F \in \IC[y,p]$, so $F \equiv a \pmod{y,z}$ for some $a \in \IC$, a contradiction.
\end{proof}

So while $b_1$ cannot be a coordinate of an automorphism of $\GA_3(\IC[x]) \cap \tilde{\psi}^*(\GA_3(\IC[x]))$, we pose the following:
\begin{conjecture}$b_1$ is a coordinate if and only if it is a coordinate of an automorphism $\phi \in \GA_3(\IC[x]) \cap \psi^*(\GA_3(\IC[x]))$.
\end{conjecture}

We devote the rest of this section to proving Theorem \ref{psistar}.  First, we require a couple lemmas.
\begin{lemma}\label{ideals}
$\IC[y,v,w] \cap (x)\IC[x][y,z,u] = \IC[y,v,w] \cap (x^2)\IC[x][y,z,u] = (yw+v^2)\IC[y,v,w]$
\end{lemma}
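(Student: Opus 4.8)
The plan is to work with the standard $\IZ$-grading on $R[y,z,u]=\IC[x][y,z,u]$ by the power of $x$, and exploit the explicit relations \eqref{pvw} and \eqref{yvwrelation}. First I would establish the inclusion $(yw+v^2)\IC[y,v,w] \subseteq \IC[y,v,w] \cap (x^2)R[y,z,u]$, which is immediate from \eqref{yvwrelation}: $yw+v^2 = x^2 p$, so any multiple $h(y,v,w)(yw+v^2)$ equals $x^2 p\, h(y,v,w) \in (x^2)R[y,z,u]$, and of course it lies in $\IC[y,v,w]$. Since $(x^2)R[y,z,u] \subseteq (x)R[y,z,u]$, this gives one containment for both sets, and it remains to prove $\IC[y,v,w] \cap (x)R[y,z,u] \subseteq (yw+v^2)\IC[y,v,w]$, as the chain of inclusions will then force all three sets to coincide.

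For the reverse inclusion, suppose $F \in \IC[y,v,w]$ and $F = x G$ for some $G \in R[y,z,u]$. The key observation is that $\IC[y,v,w]$ is a polynomial subring of $R[y,z,u]$ on which $x$ acts as a nonzerodivisor, and modulo $x$ we have $v \equiv y^2 u + y z^2$ and $w \equiv -y(y u + z^2)^2 = -y p^2$ in $\IC[y,z,u]$ (setting $x=0$ in \eqref{pvw}). So $F \in (x)R[y,z,u]$ is equivalent to saying that the image of $F$ under the substitution $v \mapsto y^2u+yz^2$, $w \mapsto -y(yu+z^2)^2$ vanishes in $\IC[y,z,u]$; that is, $F$ lies in the kernel of the $\IC$-algebra homomorphism $\pi: \IC[Y,V,W] \to \IC[y,z,u]$, $Y \mapsto y$, $V \mapsto y^2u+yz^2$, $W \mapsto -y(yu+z^2)^2$ (here $Y,V,W$ are independent indeterminates, so that $\IC[y,v,w] \cong \IC[Y,V,W]$). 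Thus I need to show $\ker \pi = (YW+V^2)$. One checks $\pi(YW+V^2) = y\cdot(-y p^2) + (yp)^2 = 0$, so $(YW+V^2) \subseteq \ker\pi$. For the reverse: $YW+V^2$ is irreducible in $\IC[Y,V,W]$ (it is degree $2$ in $W$... actually it is linear in $W$ with coefficient $Y$, so it is irreducible as $\gcd(Y,V^2)=1$), hence $(YW+V^2)$ is prime; and the image of $\pi$ is the subring $\IC[y, yp^2, y^2(yu+z^2)] = \IC[y, y^2u+yz^2, yp^2]$ — I would compute its transcendence degree over $\IC$ to be $2$ (it contains $y$ and $y^2u+yz^2$, which are algebraically independent, and everything is algebraic over $\IC(y,u)$ since $z^2 = (V/y) - yu$ modulo the relation), so $\dim(\IC[Y,V,W]/\ker\pi) = 2$, forcing $\ker\pi$ to be a height-one prime containing the prime $(YW+V^2)$, hence equal to it.

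The main obstacle I anticipate is the last step — pinning down $\ker\pi$ precisely, in particular verifying that the prime $(YW+V^2)$ is not properly contained in $\ker\pi$. The transcendence-degree argument sketched above should handle it cleanly: one shows $\pi(Y)=y$ and $\pi(V)=y^2u+yz^2$ are algebraically independent over $\IC$ (so $\ker\pi$ has height at most one), while $\ker\pi \neq 0$ since it contains $YW+V^2$, so $\ker\pi$ is exactly height one; being a nonzero prime in the UFD $\IC[Y,V,W]$ containing the irreducible element $YW+V^2$, it must be principal and equal to $(YW+V^2)$. A secondary point needing care is the claim that $\IC[y,v,w] \cap (x)R[y,z,u] = \IC[y,v,w] \cap (x^2)R[y,z,u]$ rather than just a one-sided containment; but this follows automatically once we show both sides sandwich $(yw+v^2)\IC[y,v,w]$, since we proved $(yw+v^2)\IC[y,v,w] \subseteq \IC[y,v,w]\cap(x^2)R[y,z,u] \subseteq \IC[y,v,w]\cap(x)R[y,z,u] \subseteq (yw+v^2)\IC[y,v,w]$, and the ends agree.
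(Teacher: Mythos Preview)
Your proof is correct and follows the same overall strategy as the paper: both reduce the statement to computing the kernel of the reduction-mod-$x$ map $\alpha:\IC[y,v,w]\to\IC[y,z,u]$, whose image is $\IC[y,yp,-yp^2]$, and both observe that the chain $(yw+v^2)\subseteq\IC[y,v,w]\cap(x^2)R[y,z,u]\subseteq\IC[y,v,w]\cap(x)R[y,z,u]=\ker\alpha$ collapses once $\ker\alpha=(yw+v^2)$ is known.

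The only difference is in how that kernel is pinned down. The paper identifies $\IC[y,v,w]/(yw+v^2)$ with $\IC[y,v,-v^2/y]$ and asserts that the induced map $\gamma$ onto $\IC[y,yp,-yp^2]$ is visibly an isomorphism (sending $y\mapsto y$, $v\mapsto yp$). You instead argue abstractly: $YW+V^2$ is irreducible, so $(YW+V^2)$ is a height-one prime; the image of $\pi$ has transcendence degree~$2$ (since $y$ and $yp$ are algebraically independent), so $\ker\pi$ has height exactly one; and a height-one prime in the UFD $\IC[Y,V,W]$ containing the irreducible $YW+V^2$ must equal $(YW+V^2)$. Your endgame is slightly more general-purpose and avoids having to parse what $\IC[y,v,-v^2/y]$ means as a ring; the paper's is more hands-on. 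Either way the content is the same.
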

\begin{proof}
Clearly the relation $yw+v^2=x^2p$ (from \eqref{x2p}) guarantees $$\IC[y,v,w] \cap x\IC[x][y,z,u] \supset \IC[y,v,w] \cap x^2\IC[x][y,z,u] \supset (yw+v^2) \IC[y,v,w]$$  So we simply need to see $\IC[y,v,w] \cap x\IC[x][y,z,u] \subset (yw+v^2) \IC[y,v,w]$.  Note that we have a map $\alpha: \IC[y,v,w] \rightarrow \IC[y,yp,-yp^2]$ obtained from going$\mod x$.  Clearly $yw+v^2$ is in the kernel of this map, so it descends to the quotient.  Observing that $\IC[y,v,w]/(yw+v^2) \cong \IC[y,v,\frac{-v^2}{y}]$, we have the following commutative diagram:

\begin{diagram}
\IC[y,v,w] & \rTo^\alpha & \IC[y,yp,-yp^2] \\
\dTo^\beta & \ruTo^\gamma & \\
\IC[y,v,\frac{-v^2}{y}] & & \\
\end{diagram}

Note that $\gamma$ is in fact an isomorphism; hence $\ker \beta = \ker \alpha$.  But $\beta$ is the quotient map, so $\ker \beta = (yw+v^2)$, and $\ker \alpha = \IC[y,v,w] \cap (x)\IC[x][y,z,u]$.

\end{proof}
\begin{corollary}\label{idealscorollary}
$\tilde{\psi}^{-1}(x^2\IC[x][y,z,u]) \cap \IC[x][y,z,u] = (x^2,p)\IC[x][y,z,u]$
\end{corollary}
\begin{proof}
Applying $\tilde{\psi}$ throughout, this is equivalent to showing 
\begin{equation*}
x^2\IC[x][y,z,u] \cap \IC[x][y,v,w]=(x^2,yw+v^2)\IC[x][y,v,w]
\end{equation*}

That the right side is contained in the left is immediate (recall from (\ref{x2p}) that $yw+v^2 \in x^2\IC[x][y,z,u]$).  For the opposite containment, suppose $A \in x^2\IC[x][y,z,u] \cap \IC[x][y,v,w]$ and write $A=\sum _{i=0} x^iA_i$ for some $A_i \in \IC[y,v,w]$.  It suffices to assume that $A=A_0+xA_1$.  Since $A \in x^2\IC[x][y,z,u] \subset x\IC[x][y,z,u]$, and (trivially) $xA_1 \in x\IC[x][y,z,u]$, we must also have $A_0 \in x\IC[x][y,z,u]$.  So $A_0 \in x\IC[x][y,z,u] \cap \IC[y,v,w] = (yw+v^2)\IC[y,v,w]$ by Lemma \ref{ideals}.  Thus, we may now assume $A=xA_1$.  Since $A \in x^2\IC[x][y,z,u]$, we see $A_1 \in x\IC[x][y,z,u]$, and again applying Lemma \ref{ideals} yields $A_1 \in (yw+v^2)\IC[y,v,w]$.
\end{proof}

\begin{lemma}\label{nagatatype}
Suppose $\phi \in \GA_3(\IC)$ with $\phi(y)=y$ and $\phi(p)\in (p)$.  Then $\phi = (y,cz,c^2u) \circ \exp(PD)$ for some $c \in \IC^*$, $P \in \IC[y,p]$.
\end{lemma}
\begin{proof}
Write $\phi = (y,F_1,F_2)$ for some $F_1, F_2 \in \IC^{[3]}$.  Since $\phi(p) \in (p)$ and $p$ is irreducible, we see $yF_2+F_1=rp$ for some $r \in \IC^*$.  Set $c=\frac{1}{r}J\phi \in \IC^*$.  Now we compute
$$cy=\frac{1}{r}yJ(y,F_1,F_2)=\frac{1}{r}J(y,F_1,yF_2+F_1^2)=J(y,F_1,p)$$
Observe that $J(y,\cdot,p)=D$ (recall from (\ref{D}) $D=y\delz-2z\delu$), so we may rewrite this as $D(F_1)=cy \in \ker D$; thus $F_1 = cz+\tilde{P}$ for some $\tilde{P} \in \ker D = \IC[y,p]$.  We now recompute 
\begin{equation*}
rp=\phi(p)=yF_2+(cz+\tilde{P})^2
\end{equation*}
Comparing the $z^2$ terms on each side, we deduce $r=c^2$.  We also must have $y|\tilde{P}$.  Set $P=\frac{\tilde{P}}{cy} \in \IC[y,p]$.  Plugging this back in to the relation $\phi(p)=rp=c^2p$, we obtain 
\begin{equation*}
c^2yu=yF_2+2c^2yzP+c^2y^2P^2
\end{equation*}
Thus $F_2=c^2(u-2zP-yP^2)$, and we have $\phi=\left(y,c(z+yP),c^2(u-2zP-yP^2)\right)=(y,cz,c^2u) \circ \exp(PD)$.
\end{proof}

We now have the required tools, and may proceed with the proof of Theorem \ref{psistar}.

\begin{proof}[Proof of Theorem \ref{psistar}]
Define $\phi=\tilde{\psi}^{-1} \circ \alpha \circ \tilde{\psi}$.  For (1), suppose $\phi \in \GA_3(\IC[x])$; in particular, $\phi(p) \in \IC[x]^{[3]}$.  We thus compute, noting that \eqref{psitilde} and \eqref{x2p} imply $\tilde{\psi}^{-1}(p)=\frac{p}{x^2}$,
\begin{align*}
(\tilde{\psi} ^{-1} \circ \alpha \circ \tilde{\psi})(p) &\in \IC[x][y,z,u] \\
(\alpha \circ \tilde{\psi})(\frac{p}{x^2}) &\in \IC[x][y,z,u] \\
\alpha(\frac{p}{x^2}) &\in \tilde{\psi} ^{-1}(\IC[x][y,z,u]) \\
\alpha(p) &\in \tilde{\psi} ^{-1}(x^2\IC[x][y,z,u]) 
\end{align*}
Since $\alpha \in \GA_3(\IC[x])$, we thus have $\alpha(p) \in \tilde{\psi}^{-1}(x^2\IC[x][y,z,u]) \cap \IC[x][y,z,u]$.  Applying Corollary \ref{idealscorollary}, we thus have $\alpha(p) \in (x^2,p)\IC[x][y,z,u]$, establishing assertion (1) of the theorem.

We now assume for the remainder that $\alpha(y)\equiv y \pmod{x}$.  Write 
\begin{align*}
\alpha &= (y+xQ,\sum _{i=0} x^iG_i, \sum _{i=0}x^i H_i)
\end{align*}for some $Q \in \IC[x]^{[3]}$, $G_i,H_i \in \IC^{[3]}$.  Let 
\begin{align}\label{QP}
Q&=\sum _{i=0} x^iQ_i & \alpha(p)&=\sum _{i=0} x^i P_i
\end{align}
for some $Q_i, P_i \in \IC^{[3]}$.  Direct computation shows
\begin{align}
P_0 &= yH_0+G_0^2  &
P_1 &= yH_1+Q_0H_0+2G_0G_1 \label{p01}
\end{align}

For one direction of (2), assume $\phi \in \GA_3(\IC[x])$.  In particular, $\phi(p) \in \IC[x]^{[3]}$.  We then compute, using (\ref{QP}), 
\begin{align*}
\phi(x^2p)=(\tilde{\psi}^{-1} \circ \alpha \circ \tilde{\psi})(x^2p) = (\alpha \circ \tilde{\psi})(p)=\tilde{\psi}\left(\sum _{i=0} x^{i}P_i\right) 
\end{align*}
Since $\phi(x^2p) \in x^2\IC[x]^{[3]}$, we thus see $P_0+xP_1 \in \tilde{\psi}^{-1}(x^2\IC[x]^{[3]}) \cap \IC[x]^{[3]} = (x^2,p)\IC[x]^{[3]}$ by Corollary \ref{idealscorollary}.  Since $P_0, P_1 \in \IC^{[3]}$, we then see $P_0+xP_1 \in (p)\IC[x]^{[3]}$, whence $P_0, P_1 \in (p)\IC^{[3]}$.

Consider $\bar{\alpha}=(y,G_0,H_0) \in \GA_3(\IC)$, the image modulo $x$; Since $p$ is irreducible and $\bar{\alpha}(p) = P_0 \in (p)$, by Lemma \ref{nagatatype} $\bar{\alpha}=(y,cz,c^2u) \circ \exp(FD)$ for some $c \in \IC^*$, $F \in \IC[y,p]$.   Thus we have
\begin{align*}
G_0 &= c(z+yF) & 
H_0 &= c^2(u-2zF-yF^2) 
\end{align*}
In particular, we see from (\ref{p01}) that we must have $P_0=c^2p$.  Since $\alpha(p) \in (p,x^2)$ by part (1) and $P_1 \in (p)C^{[3]}$, we can write $\alpha(p)=c^2p+xpA^\prime+ x^2B^\prime$ for some $A^\prime, B^\prime \in \IC[x]^{[3]}$.  Now, 
\begin{align*}
\phi(xz) &= \phi(v-yp) \\
&= (\tilde{\psi} ^{-1} \circ \alpha \circ \tilde{\psi}) (v-yp) \\
&= (\alpha \circ \tilde{\psi})(z-y\frac{p}{x^2}) \\
&=\tilde{\psi}\left(\sum _{i=0} x^i G_i -(y+xQ)(\frac{p}{x^2}(c^2+xA^\prime)+B^\prime)\right) \\
&=\sum _{i=0} x^i \tilde{\psi}(G_i) -(y+x\tilde{\psi}(Q))\left(p(c^2+x\tilde{\psi}(A^\prime))+\tilde{\psi}(B^\prime)\right) 
\end{align*}
with the first equality arising from (\ref{pvw}).  Since $\phi(z) \in \IC[x]^{[3]}$, we must have $\phi(xz) \in x\IC[x]^{[3]}$, and thus 
\begin{align*}
0&\equiv \tilde{\psi}(G_0)-y(pc^2)-y\tilde{\psi}(B^\prime)& \pmod{x} \\
0&\equiv \tilde{\psi}(c(z+yF))-y(pc^2)-y\tilde{\psi}(B^\prime) & \pmod{x} \\
0&\equiv c(v-ypc)+y\tilde{\psi}(cF-B^\prime) & \pmod{x} \\
0&\equiv cyp(1-c)+y\tilde{\psi}(cF-B^\prime) & \pmod{x}
\end{align*}

Observing that $\tilde{\psi}((y,z,u)\IC[x]^{[3]}) \subset (x,y)\IC[x]^{[3]}$ (coming from the fact that $y,v,w \in (x,y)\IC[x]^{[3]}$), we must have $c=1$.  We also must have $F-B^\prime \in \IC[x]^{[3]} \cap \tilde{\psi}^{-1}(x\IC[x]^{[3]})=(x,p)\IC[x]^{[3]}$ (from Corollary \ref{idealscorollary}).  Thus we write $B^\prime=F+xB+pA^{\prime \prime}$ for some $B \in \IC[x]^{[3]}$.  Setting $A=A^\prime+xA^{\prime \prime}$ gives $\alpha(p)=p+x^2F+xpA+x^3B$ as required.

For the converse, assume $\alpha(p)=p+x^2F+xpA+x^3B$ and $\bar{\alpha} \equiv \exp(FD) \pmod{x}$.  In particular, we have
\begin{align}\label{G0H02}
G_0 &= z+yF & H_0 &= u-2zF-yF^2
\end{align}
Since $J\phi = J\alpha = J\bar{\alpha}=1$, it suffices to check that $\phi \in \MA_3(\IC[x])$; compute
\begin{align*}
\phi(y)&=(\tilde{\psi} ^{-1} \circ \alpha \circ \tilde{\psi})(y)=(\alpha \circ \tilde{\psi})(y) =y+xQ(y,v,w) \in \IC[x]^{[3]}
\end{align*}
Next, we show $\phi(xz) \in x\IC[x]^{[3]}$.  Again, using \eqref{pvw},
\begin{align*}
\phi(xz) &= (\tilde{\psi} ^{-1} \circ \alpha \circ \tilde{\psi})(v-yp)\\
&=(\alpha \circ \tilde{\psi})\left(z-y\frac{p}{x^2}\right) \\
&=\tilde{\psi}\left(\sum _{i=0}x^i G_i-\left(y+xQ\right)\left(\frac{1}{x^2}\right)\left(p+x^2F+xpA+x^3B\right)\right) \\
&=\sum _{i=0}x^i \tilde{\psi}(G_i)-\left(y+x\tilde{\psi}(Q)\right)\left(p+\tilde{\psi}(F)+x(p\tilde{\psi}(A)+x\tilde{\psi}(B))\right)
\end{align*}
So $\phi(xz) \in \IC[x]^{[3]}$.  Going modulo $x$, and recalling from (\ref{G0H02}) that $G_0=z+yF$ (and thus $\tilde{\psi}(G_0)=v+y\tilde{\psi}(F)$), we obtain
\begin{align*}
\phi(xz) &\equiv (v+y\tilde{\psi}(F))-y(p+\tilde{\psi}(F))\equiv 0 & \pmod{x} 
\end{align*}
So we have $\phi(xz) \in x\IC[x]^{[3]}$ and thus $\phi(z) \in \IC[x]^{[3]}$.  Since $\phi(y) \notin x\IC[x]^{[3]}$, it suffices to check that $\phi(p) \in \IC[x]^{[3]}$ as well (as then $\phi(u) \in \IC[x]^{[3]}$).   
\begin{align*}
\phi(p) &= (\tilde{\psi}^{-1} \circ  \alpha \circ \tilde{\psi}) (p) \\
&= (\alpha \circ \tilde{\psi}) (\frac{p}{x^2}) \\
&=\tilde{\psi}\left(\frac{1}{x^2}(p+x^2F+xpA+x^3B)\right) \\
&=p+\tilde{\psi}(F)+x(p\tilde{\psi}(A)+\tilde{\psi}(B))
\end{align*}
So $\phi(p) \in \IC[x]^{[3]}$, and thus $\phi \in \GA_3(\IC[x,x^{-1}]) \cap \MA_3(\IC[x])$.  Since $J\phi=J\alpha = 1$, by Theorem \ref{overring} $\phi \in \GA_3(\IC[x])$.

For the final part, we will use Theorem \ref{BEWtheorem} and the following well known result of Smith \cite{Smith}:
\begin{theorem}[Smith]
Let $A$ be a $\IQ$-algebra and $D$ be a triangular derivation of $A^{[n]}$.  Then for any $P \in \ker D$, $\exp(PD)$ is 1-stably tame.
\end{theorem}

Suppose $\phi$, $\alpha$ are as in (3).  While $\phi \notin \TA_3(\IC[x,x^{-1}])$, Smith's theorem shows that $\tilde{\psi}$ is 1-stably tame (over $\IC[x,x^{-1}]$), hence $\phi$ is stably tame over $\IC[x,x^{-1}]$.  One also quickly checks that $J\phi=J\alpha\in \IC^*$.  So we only need to see that $\bar{\phi}$ is stably a composition of elementaries.  By Lemma \ref{nagatatype}, since $\bar{\phi}(y)=y$ and $\bar{\phi}(p)=p$, a composition with a diagonal map allows us to assume $\bar{\phi}(p) = \exp(PD)$ for some $P \in \IC[y,p]$ (and thus $J\phi = J\bar{\phi}=1$).  Again appealing to Smith's theorem, we thus have $\bar{\phi}(p) \in \EA_4(\IC)$.  Thus by Theorem \ref{BEWtheorem}, $\phi$ is stably tame.
\end{proof}

\section{\Vtype Polynomials}
Instead of only considering the \Venereau polynomials, one may generalize these slightly and still retain all the coordinate-like properties. 

\begin{definition}
A {\em \Vtype polynomial} is a polynomial of the form $y+xQ$ for some $Q \in \IC[x][v,w]$.
\end{definition}

Note that $Q=x^{m-1}v$ gives the \Venereau polynomials $b_m$.  We first give a sufficient condition for a \Vtype polynomial to be a coordinate.

\begin{theorem}\label{coordinate}
If $Q = x^2Q_1 + xvQ_2$ for some $Q_1 \in \IC[x][v,w]$ and $Q_2 \in \IC[v^2,w]$, then the \Vtype polynomial $y+xQ$ is a coordinate.
\end{theorem}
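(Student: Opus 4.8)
The plan is to mimic the constructions in Propositions~\ref{f3} and~\ref{f2}: produce an automorphism in $\Aut_R R[y,v,w]$ sending $y$ to $y+xQ$, and then verify that the same map lies in $\Aut_R R[y,z,u]$ by checking that, after conjugating by $\psi$, all the negative powers of $x$ cancel. Concretely, the natural candidate is $\eta = \psi^{-1}\circ\sigma\circ\psi$, where $\sigma$ is an elementary-type $S$-automorphism of $S[y,v,w]=S[y,z,u]$ that fixes $v$ and $w$ (so conjugation is transparent on the ``good'' subring) or, following the $\varphi_n$ computation, adjusts $v$ by a multiple of $w$; in the simplest version one takes $\sigma(y,v,w) = (y+xQ,\,v,\,w)$, which is a triangular $R$-automorphism of $R[y,v,w]$ since $Q\in R[v,w]$. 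The only thing to prove is that $\eta$ has all coordinates in $R[y,z,u]$; since $J\eta = 1$ automatically, this membership plus the fact that $\eta^{-1}$ is built the same way gives $\eta\in\Aut_R R[y,z,u]$.

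The key computation is to evaluate $\eta = \psi^{-1}(y+xQ,\,v/x\cdot x,\dots)$ — more carefully, write $\psi(y,z,u)=(y,v/x,w/x^2)$, apply $\sigma$ in the $(y,v,w)$-coordinates to get $(y+xQ,\,v/x,\,w/x^2)$ (using that $Q\in R[v,w]$ is unchanged since $\sigma$ fixes $v,w$), and then apply $\psi^{-1}$. The $y$-coordinate is $f:=y+xQ$, which is manifestly in $R[y,z,u]$ because $v,w\in R[y,z,u]$. For the $z$- and $u$-coordinates, $\psi^{-1}=\exp(-\tfrac{p}{x}D)$ introduces the troublesome $\tfrac1x$; the point of the hypothesis $Q = x^2Q_1 + xvQ_2$ with $Q_2\in\IC[v^2,w]$ is exactly that the leading obstruction terms cancel. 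Following the pattern of~\eqref{varphi}, $\psi^{-1}$ applied to the triple contributes, modulo integral terms, a factor $\tfrac1x(v - yp)$ in the $z$-slot (which equals $z$ by~\eqref{pvw}) and $\tfrac1x$ times $Q$-dependent terms; the $x^2Q_1$ piece clearly survives division by $x$ twice, and the $xvQ_2$ piece needs one cancellation against the relation $yw+v^2 = x^2p$. The requirement $Q_2\in\IC[v^2,w]$ is what lets us replace $v\cdot v^{2k} = v\cdot(v^2)^k$ and use $v^2 \equiv -yw \pmod{x^2}$ (equivalently $v^2 = x^2p - yw$) to trade the offending $v$-factor for an $x^2$ — this is the analogue, at the level of a general $Q$, of the cancellation that forced the $x^{2n-3}w^2$ rather than $x^{2n-4}$ terms in $\varphi_n$.

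The main obstacle is bookkeeping: one must carefully expand $\eta = \psi^{-1}\circ\sigma\circ\psi$ keeping track of which monomials in $Q$ produce negative $x$-powers after the two applications of the operator $\tfrac1x D$ hidden in $\psi^{\pm1}$, and then invoke the structural identities $v = xz + yp$ and $yw + v^2 = x^2p$ to clear denominators. I expect the cleanest route is to argue term-by-term: show (i) for $Q=x^2Q_1$ with $Q_1\in R[v,w]$ arbitrary, $\psi^{-1}(y+xQ,v/x,w/x^2)\in R[y,z,u]$ directly, since the extra $x^2$ absorbs both powers of $1/x$; and (ii) for $Q = xvQ_2$ with $Q_2\in\IC[v^2,w]$, use $v^2 = x^2p - yw$ repeatedly inside $Q_2(v^2,w)$ to rewrite $xvQ_2$ in a form where the conjugation is integral. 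Linearity in $Q$ then finishes it. Since the computation is essentially the one already carried out for $\varphi_n$ (the case $Q_1=0$, $Q_2 = x^{n-2}$, which lies in $\IC[v^2,w]$ only when... in fact $\varphi_n$ corresponds to $Q = x^{n-1}v$, i.e. $Q_2 = x^{n-2}\in R$), one could even present the proof as ``the identical computation as in Theorem~\ref{f2}, now with the general $Q$'' and simply point to where each hypothesis on $Q_1, Q_2$ is used; but a self-contained term-by-term verification via (i) and (ii) above is safer and barely longer.
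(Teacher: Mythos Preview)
Your plan for case (i) is correct, and the composition argument would be valid if (ii) held; but (ii) fails, and this is precisely where the theorem has content. Take the simplest instance $Q_1=0$, $Q_2=1$, so $Q=xv$ and $y+xQ=f_2$. Your map $\sigma=(y+xQ,v,w)$ is then exactly $\theta_2$ in $(y,v,w)$-coordinates, and the computation in Proposition~\ref{f3} already shows $\theta_2\notin\End_R R[y,z,u]$: its $z$-component carries the term $x^{-1}vwf_2$. More generally, under $\sigma=(y+x^2vQ_2,v,w)$ one has $\sigma(p)=p+vwQ_2$, hence $\sigma(z)=z-\frac{1}{x}yvwQ_2+\cdots$; the offending $\frac{1}{x}yvwQ_2$ is linear in $Q_2$, so rewriting $v^2=x^2p-yw$ \emph{inside} $Q_2(v^2,w)$ cannot touch it --- the obstruction is already present when $Q_2$ is a constant.

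What is missing is the $\varphi_n$-style correction of the $v$-coordinate, which you allude to but never specify. The paper's $\varphi$ is the composite $(y,\,v-\frac{1}{2}x^2 T,\,w)\circ(y+xQ,v,w)$ with $T=\sum\alpha_{a,b}y^aw^{a+b+1}\in\IC[y,w]$, where $Q_2=\sum\alpha_{a,b}v^{2a}w^b$. The hypothesis $Q_2\in\IC[v^2,w]$ is used not to simplify $Q_2$ but to \emph{design} $T$: expanding $\varphi(x^2p)=\varphi(y)w+\varphi(v)^2$, the cross term $-x^2vT$ from $\varphi(v)^2$ is engineered so that, via $v^{2a}\equiv(-yw)^a\pmod{x^2}$, it cancels the $x^2vwQ_2$ coming from $\varphi(y)w$. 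This yields $\varphi(p)\equiv p+xwQ_1\pmod{x^2}$, and that congruence is exactly what makes $\varphi(z),\varphi(u)\in R[y,z,u]$. So you have correctly located the problem in the intersection $\Aut_R R[y,v,w]\cap\Aut_R R[y,z,u]$, but the one substantive step --- producing the correction $T$ --- is absent from the proposal.
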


\begin{proof}
Write $Q_2 = \sum \alpha _{a,b} v^{2a}w^b$ for some $\alpha _{a,b} \in \IC$.  Define  $\alpha \in \GA_3(\IC[x])$ by
\begin{align*}
\alpha &= \left(y+xQ(z,u),z-\frac{1}{2}x^2\sum \alpha _{a,b} (-1)^ay^au^{a+b+1},u\right)
\end{align*}
Direct computation shows
\begin{align*}
\alpha(p) &\equiv \left(y+xQ(z,u)\right)u+\left(z-\frac{1}{2}x^2 \sum \alpha _{a,b} (-1)^ay^au^{a+b+1}\right)^2  &\pmod{x^3} \\
&\equiv (yu+z^2)+x^2z\left(uQ_2(z,u)-\sum \alpha _{a,b} (-1)^a y^a u^{a+b+1}\right) & \pmod{x^3} \\
&\equiv p+x^2z\left(\sum \alpha _{a,b}u^{b+1} (z^{2a} -(-yu)^a)\right) & \pmod{x^3} 
\end{align*}
Noting that $z^{2a}-(-yu)^a \in (z^2+yu) = (p)$, we thus have that $\alpha(p) \equiv p \pmod{(xp,x^3)}$.
Thus, by Theorem \ref{psistar} (2), $\phi := \tilde{\psi} ^{-1} \circ \alpha \circ \tilde{\psi} \in \GA_3(\IC[x])$, and one quickly checks $\phi(y)=y+xQ$.
\end{proof}
\begin{corollary}The second \Venereau polynomial $b_2$ is a coordinate.
\end{corollary}

It is interesting to note that the above automorphisms are stably tame; moreover, any automorphism of the above type, with a \Vtype polynomial as a coordinate, must be stably tame.  

\begin{theorem}
If $Q=x^2Q_1+xvQ_2$ for some $Q_1 \in \IC[x][v,w]$ and $Q_2 \in \IC[v^2,w]$, any $\varphi \in \GA_3(\IC[x])$ with $\varphi(y)=y+xQ$ is stably tame.
\end{theorem}

\begin{proof}
First, note that the $\phi$ constructed in the proof of Theorem \ref{coordinate} is stably tame; indeed, the fact that $\alpha(p) \equiv p \pmod{(xp,x^3)}$ guarantees $\phi(p) \equiv p \pmod{x}$, thus $\phi$ is stably tame by part 3 of Theorem \ref{psistar}.
Now, consider arbitrary $\varphi \in \GA_3(\IC[x])$ with $\varphi(y)=y+xQ$.  Then $(\varphi \circ \phi ^{-1})(y) = (\phi \circ \phi ^{-1}) (y)=y$.  In other words, $\varphi \circ \phi ^{-1} \in \GA_2(\IC[x][y])$.  
The main result of \cite{BEW} states that any automorphism in two variables over a regular ring is stably tame; since $\IC[x]$ is regular, so is $\IC[x][y]$, thus the composition $\varphi \circ \phi ^{-1}$ is stably tame.  Since $\phi$ is also stably tame, we thus see that $\varphi$ is stably tame.
\end{proof}

Among the \Vtype polynomials, the {\em \Venereau complements} are worth highlighting:
\begin{definition}The polynomials $c_m := y+x^mw$ ($m \geq 1)$ are called {\em \Venereau complements}
\end{definition}

\begin{theorem}\label{bc}
The \Venereau polynomial $b_m$ is a coordinate if and only if the \Venereau complement $c_{2m}$ is.
\end{theorem}
\begin{proof}
Define $\theta _m \in \GA_3(\IC[x,x^{-1}])$ by
\begin{align*}
\theta _m &= \tilde{\psi}^{-1} \circ (y+x^mz-\frac{1}{4}x^{2m}u,z-\frac{1}{2}x^{m}u,u) \circ \tilde{\psi}
\end{align*}
One easily checks that $\theta _m (y)=b_m-\frac{1}{4}x^{2m}w$, $\theta _m (w)=w$, and via Theorem \ref{psistar}, $\theta _m \in \GA_3(\IC[x])$ for each $m \geq 1$.  It thus follows that $b_m$ is a coordinate if and only if $y+\frac{1}{4}x^{2m}w$ is, as $(y+\frac{1}{4}x^{2m}w,*,*) \circ \theta _m = (b_m,*,*)$.  Consider the automorphism $\beta_m \in \GA_4(\IC)$ given by $\beta _m=(\lambda x,y,\lambda z, \lambda ^2u)$ where $\lambda ^{2m+4}=4$.  One easily checks that $\beta _m (w)=\lambda ^4 w$ and thus $\beta _m (y+\frac{1}{4}x^{2m}w)=c_{2m}$.  Thus conjugation by $\beta _m$ shows that $c_{2m}$ is a coordinate if and only if $y+\frac{1}{4}x^{2m}w$ is as well.
\end{proof}

Note that by Theorem \ref{coordinate}, $c_m$ is a coordinate for $m \geq 3$.  So this slightly stronger fact gives an alternate proof of the fact that $b_m$ is a coordinate for all $m \geq 2$. 

\begin{remark}
Theorem \ref{bc} can be generalized slightly: let $P(w) \in \IC[x][w]$.  Then $y+xvP(w)$ is a coordinate if and only if $y+x^2wP(w)^2$ is.  The proof is almost identical.
\end{remark}

The remainder is devoted to showing that all \Vtype polynomials satisfy a variety of coordinate-like properties that are known to hold for the \Venereau polynomial.  In particular, we show that they are strongly $x$-residual coordinates (Proposition \ref{VTypeResidual}), hyperplanes (Theorem \ref{hyperplane}), hyperplane fibrations (Theorem \ref{hyperplanefibration}), and stably tame 1-stable coordinates (Theorem \ref{stablytamestable}).

\begin{proposition}\label{VTypeResidual}
Let $f=y+xQ$, $Q \in \IC[x][v,w]$ be a \Vtype polynomial.  Then $f$ is a strongly $x$-residual coordinate.
\end{proposition} 

\begin{proof}
Since $Q \in \IC[x][v,w]$, $\phi=\tilde{\psi}^{-1} \circ (y+xQ(z,u),z,u) \circ \tilde{\psi} \in \GA_3(\IC[x,x^{-1}])$ and $\phi(y)=y+xQ$, so $y+xQ$ is an $\IC[x,x^{-1}]$-coordinate.  Clearly $\bar{f} \equiv y \pmod{x}$, so $f$ is a strongly $x$-residual coordinate.
\end{proof}

To see that \Vtype polynomials are hyperplanes, we use the following fact (pointed out to me by Arno van den Essen), which also appears (with $b=0$) in \cite{KVZ}.  A special case of this was used in \cite{Vthesis} to show that $f_1$ is a hyperplane.

\begin{lemma} \label{arnolemma}
Let $A$ be a commutative ring, $a,b \in A$, and $g \in A[y]$.  Then $A[y]/(y+ag(y)-b) \cong A[y]/(y+g(ay+b))$.
\end{lemma}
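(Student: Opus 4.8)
The plan is to establish the isomorphism by exhibiting an explicit $A$-algebra isomorphism between the two quotient rings. Both $A[y]/(y+ag(y)-b)$ and $A[y]/(y+g(ay+b))$ are cyclic $A$-algebras, so it suffices to find compatible substitutions. First I would work on the right-hand ring: in $A[y]/(y+g(ay+b))$, introduce the element $t = ay+b$. Then $y = ay+b-b+y = t-b+y$, but more usefully, the defining relation reads $y = -g(ay+b) = -g(t)$, so $t = ay+b = -ag(t)+b$, i.e. $t + ag(t) - b = 0$ holds in the right-hand ring. This suggests the map $A[y] \to A[y]/(y+g(ay+b))$ sending $y \mapsto ay+b$ factors through $A[y]/(y+ag(y)-b)$, giving a homomorphism $\Phi: A[y]/(y+ag(y)-b) \to A[y]/(y+g(ay+b))$.

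Next I would construct the inverse. Going the other way, in $A[y]/(y+ag(y)-b)$ consider the element $s = -g(y)$; the relation $y = -ag(y)+b = as+b$ shows that $y = as+b$, hence $s = -g(y) = -g(as+b)$, so $s + g(as+b) = 0$. This gives a homomorphism $\Psi: A[y]/(y+g(ay+b)) \to A[y]/(y+ag(y)-b)$ sending $y \mapsto -g(y)$. Then I would check $\Phi \circ \Psi$ and $\Psi \circ \Phi$ are the identity. Tracing through: $\Psi(\Phi(y)) = \Psi(ay+b) = -ag(y)+b = y$ in $A[y]/(y+ag(y)-b)$ by the defining relation, and similarly $\Phi(\Psi(y)) = \Phi(-g(y)) = -g(ay+b) = y$ in $A[y]/(y+g(ay+b))$. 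Since these are $A$-algebra maps determined by where $y$ goes, this shows they are mutually inverse.

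I expect the main subtlety — not really an obstacle, but the point requiring care — is verifying that the naive substitution maps $A[y] \to A[y]$ actually descend to the quotients, i.e. that each sends the relevant defining ideal into the other. This is exactly the computation above showing $t + ag(t) - b = 0$ and $s + g(as+b) = 0$ in the target rings, and it relies only on the defining relation of the target, so it goes through cleanly. An alternative, perhaps cleaner, presentation is to observe that $A[y]/(y+ag(y)-b) \cong A[z]/(z)$ under an appropriate change of variable only when things are invertible, so I would avoid that and stick with the direct two-sided-inverse argument, which works over an arbitrary commutative ring with no hypotheses on $a$ or $b$. I would present the proof in about four lines: define $\Phi$ and $\Psi$, note they are well-defined by the one-line relation checks, and observe they are inverse to each other.
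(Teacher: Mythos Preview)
Your proof is correct. You directly construct mutually inverse $A$-algebra homomorphisms $\Phi\colon y\mapsto ay+b$ and $\Psi\colon y\mapsto -g(y)$ and check both well-definedness and the two composition identities; each step uses only the defining relation of the target quotient, so the argument works over an arbitrary commutative ring with no hypotheses on $a$ or $b$.

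The paper's proof reaches the same conclusion by a slightly different route: it passes through the intermediate ring $A[ay+b]$, then adjoins an auxiliary variable $t$ subject to $t=-g(ay+b)$, rewrites the relations using this, and finally eliminates $ay+b$ in favor of $t$. In effect the paper's $t$ is your $\Psi(y)$, so the underlying algebra is the same; the difference is packaging. Your two-sided-inverse presentation is more self-contained and avoids having to justify the first isomorphism $A[y]/(y+ag(y)-b)\cong A[ay+b]/(a(y+g(ay+b)))$, which in the paper's chain is the step that most needs care when $a$ may be a zerodivisor. The paper's approach, on the other hand, makes the ``change of generator'' viewpoint more visible. Either is fine; yours is the cleaner write-up.
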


\begin{proof}
We compute below, where the first isomorphism is given by sending $y$ to $ay+b$; we then identify $t=-g(ay+b)$, and use this to rewrite the relation in terms of $t$ only.
\begin{align*}
A[y]/(y+ag(y)-b) &\cong A[ay+b]/\left(a(y+g(ay+b))\right) \\
&\cong A[ay+b,t]/\left(a(y+g(ay+b)),t+g(ay+b)\right) \\
&\cong A[ay+b,t]/\left(a(y-t),t+g(ay+b)\right) \\
&\cong A[t]/\left(t+g(at+b)\right)
\end{align*}
\end{proof}

\begin{corollary} \label{arno}
Let $A$ be a commutative ring, $a,b \in A^{[n]}$ and $g \in A^{[n]}[y]$.  Then $y+ag(y)-b$ is a hyperplane (over $A$) if and only if $y+g(ay+b)$ is as well.
\end{corollary}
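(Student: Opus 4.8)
The plan is to derive Corollary \ref{arno} directly from the preceding Lemma. The Lemma is a statement about a single commutative ring $A$; the Corollary is simply the special case in which the ring is itself a polynomial ring $A^{[n]}$ and the elements $a, b$ live in that polynomial ring. So the first step is to set $\tilde{A} = A^{[n]}$, regard $a, b \in \tilde{A}$ and $g \in \tilde{A}[y] = A^{[n+1]}$, and apply the Lemma with $\tilde{A}$ in place of $A$. This yields a ring isomorphism $\tilde{A}[y]/(y + ag(y) - b) \cong \tilde{A}[y]/(y + g(ay+b))$, i.e. an $A$-algebra isomorphism $A^{[n+1]}/(y+ag(y)-b) \cong A^{[n+1]}/(y+g(ay+b))$.

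The second step is to unwind what ``hyperplane over $A$'' means and observe that the isomorphism above is exactly what is needed. A polynomial $h \in A^{[n+1]}$ is a hyperplane over $A$ precisely when $A^{[n+1]}/(h) \cong A^{[n]}$ as $A$-algebras. Applying this with $h = y + ag(y) - b$ and with $h = y + g(ay+b)$, the isomorphism of quotient rings obtained in the first step shows that one quotient is $A$-isomorphic to $A^{[n]}$ if and only if the other is. That gives the biconditional in the statement.

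The only subtlety worth flagging — and I expect this to be the sole ``obstacle,'' such as it is — is making sure the isomorphism produced by the Lemma is an isomorphism of $A$-algebras, not merely of $\tilde{A} = A^{[n]}$-algebras or of rings. Tracing through the proof of the Lemma, the maps used (substituting $y \mapsto ay+b$, adjoining $t$, eliminating variables) are all $\tilde{A}$-algebra homomorphisms, hence a fortiori $A$-algebra homomorphisms, so the conclusion transfers cleanly. With that checked, no computation is required: the Corollary is a formal consequence of the Lemma together with the definition of a hyperplane. I would therefore write the proof in essentially two sentences: apply the Lemma with $A$ replaced by $A^{[n]}$ to get the quotient isomorphism, then note that $y+ag(y)-b$ is a hyperplane over $A$ iff $A^{[n+1]}/(y+ag(y)-b) \cong A^{[n]}$ iff $A^{[n+1]}/(y+g(ay+b)) \cong A^{[n]}$ iff $y+g(ay+b)$ is a hyperplane over $A$.
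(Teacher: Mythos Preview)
Your proposal is correct and matches the paper's approach: the paper gives no separate proof of the corollary, treating it as an immediate consequence of the preceding lemma applied with $A^{[n]}$ in place of $A$. Your observation that the isomorphisms in the lemma's proof are $\tilde{A}$-algebra maps (hence $A$-algebra maps) is the only point worth making explicit, and you have made it.
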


\begin{theorem}\label{hyperplane}
Let $f=y+xQ$, $Q \in \IC[x][v,w]$, be a \Vtype polynomial.  Then $f$ is an $\IC[x]$-hyperplane of $\IC[x]^{[3]}$; that is, $\IC[x]^{[3]}/(f) \cong _{\IC[x]} \IC[x]^{[2]}$.
\end{theorem}

\begin{proof}
Similar to (\ref{pvw}), define
\begin{align}\label{pvw0}
p_0&=xyu+z^2 & v_0&=z+yp_0 & w_0& =xu-2v_0p_0+yp_0^2
\end{align}
\begin{claim}It suffices to check that $y+xQ_0$ is a coordinate for any $Q_0 \in \IC[x][v_0,w_0]$.
\end{claim}
\begin{proof}
Applying Corollary \ref{arno} to $f$ (with $a=x$ and $b=0$) yields that $f$ is a hyperplane if and only if $f_0=y+Q(xv_0,xw_0)$ is.  We can write $Q(xv_0,xw_0)=xQ_0+\lambda$ for some $Q_0 \in \IC[x][v_0,w_0]$ and $\lambda \in \IC$; hence $f_0=y+xQ_0+\lambda$.  Thus, if $y+xQ_0$ is a coordinate, so is $f_0$, hence $f_0$ is a hyperplane and (by Corollary \ref{arno}) so is $f$.
\end{proof}
We have thus reduced the theorem to showing that $y+xQ_0$ is a coordinate for any $Q_0 \in \IC[x][v_0,w_0]$.  The proof of this is quite analagous to that of Theorem \ref{f2}.  First, define $D_0 = xy \delz-2z\delu$, and observe that $\ker D_0 = \IC[x][y,p_0]$.  Also define
\begin{equation*}
\psi _0 = \exp\left(\frac{p_0}{x}D_0\right) = \left(y,v_0, \frac{w_0}{x}\right)
\end{equation*}
In particular, since $\psi _0$ fixes $p_0$, we see
\begin{equation}\label{p0}
yw_0+v_0^2=p_0
\end{equation}
Also define 
\begin{align*}
\alpha _0 &:= (y+xQ_0(z,u),z,u) & \phi _0 &:= \psi _0 ^{-1} \circ \alpha _0 \circ \psi _0
\end{align*}
One immediately sees that $\phi _0 \in \GA_3(\IC[x,x^{-1}])$, $J\phi _0 =1$, and $\phi _0 (y)=y+xQ_0$.  So by Theorem \ref{overring}, we need only see that $\phi _0 \in \MA_3(\IC[x])$.  We first compute $\phi _0 (v_0)$, $\phi _0 (w_0)$, and $\phi _0 (p_0)$, and use those to compute $\phi_0(z)$, $\phi_0(u)$.
\begin{align*}
\phi _0 (v_0) &= (\psi _0 ^{-1} \circ \alpha _0 \circ \psi _0) (v_0) &
\phi _0 (w_0) &= (\psi _0 ^{-1} \circ \alpha _0 \circ \psi _0) (w_0) \\
&= (\alpha _0 \circ \psi _0) (z) &
&= (\alpha _0 \circ \psi _0) (xu) \\
&= (\psi _0) (z) &
&= (\psi _0) (xu) \\
&=v_0 & &=w_0
\end{align*}
Now from (\ref{p0}) we see
\begin{equation*}
\phi_0 (p_0) = \phi_0(yw_0+v_0^2)
= (y+xQ_0)w_0+v_0^2 
=p_0+xQ_0w_0
\end{equation*}
Finally, we compute, using \eqref{pvw0}
\begin{align*}
\phi_0 (z) &= \phi_0(v_0 -yp_0) \\
&= v_0-(y+xQ_0)(p_0+xQ_0w_0) 
\end{align*}
and
\begin{align*}
\phi _0 (u) &= \frac{1}{x} \phi_0(w_0+2v_0p_0-yp_0^2) \\
&= \frac{1}{x}\left(w_0+2v_0(p_0+xQ_0w_0)-(y+xQ_0)(p_0+xQ_0w_0)^2\right) \\
&= \frac{1}{x}\left(w_0+2v_0p_0-yp_0^2\right)+Q_0(2w_0v_0-2w_0yp_0-p_0^2)+ \\
&\phantom{xxx}-xQ_0^2(yw_0^2-2w_0p_0)-x^2Q_0^3w_0^2 \\
&= u+Q_0(2w_0v_0-2w_0yp_0-p_0^2)-xQ_0^2(yw_0^2-2w_0p_0)-x^2Q_0^3w_0^2 
\end{align*}
Thus $\phi _0 \in \MA_3(\IC[x])$ and hence $\phi _0 \in \GA_3(\IC[x])$.
\end{proof}

We even have something stronger, namely that all \Vtype polynomials define hyperplane fibrations.  To show this for $b_1$, \Venereau made use of the fact that $b_1=y+x(xz+y(yu+z^2))$ is linear in $u$; this does not hold for \Vtype polynomials in general, so we have to do a bit more work.
\begin{theorem}\label{hyperplanefibration}
Let $f=y+xQ$, $Q \in \IC[x][v,w]$, be a \Vtype polynomial.  Then for each $c \in \IC$, $f-c$ is an $\IC[x]$-hyperplane of $\IC[x]^{[3]}$.
\end{theorem}
\begin{proof}
By Theorem \ref{hyperplane}, we assume $c \neq 0$.  As in the proof of that theorem, we would like to apply Corollary \ref{arno}.  However, now we must do so with $b=c$ (still $a=x$).  Define
\begin{align}\label{pvw1}
p_1&=(xy+c)u+z^2 & v_1&=xz+(xy+c)p_1 & w_1& =x^2u-2v_1p_1+(xy+c)p_1^2
\end{align}
Then Corollary \ref{arno} yields that we need only show $y+Q(v_1,w_1)$ is a hyperplane (in fact, we show it is a coordinate).  The general idea is as follows: construct an automorphism $(y+Q(v_1,w_1),*,*) \in \GA_3(\IC[x,x^{-1}])$, then compose on the left with automorphisms fixing $y$ until it is also in $\MA_3(\IC[x,x^{-1}])$.  Then we apply Theorem \ref{overring}.  We start by defining a derivation over $\IC[x,x^{-1}]$ by
\begin{equation*}
D_1 = (xy+c)\delz-2z\delu
\end{equation*}
$D_1$ is triangular, and hence locally nilpotent.  One easily checks that $\ker D_1 = \IC[x][y,p_1]$.  Define
\begin{equation*}
\phi _0 = \exp(\frac{p_1}{x}D_1) = (y,\frac{v_1}{x},\frac{w_1}{x^2})
\end{equation*}
Since $x^2p_1 \in \ker D_1$, we must have $\phi _0(x^2p_1)=x^2p_1$ and thus obtain
\begin{equation}\label{x2p1}
(xy+c)w_1+v_1^2=x^2p_1
\end{equation}
Our first step is to define
\begin{equation*}
\phi _1 = (y+Q(xz,x^2u),z,u) \circ \phi _0 = (y+Q(v_1,w_1),\frac{v_1}{x},\frac{w_1}{x^2})
\end{equation*}
Next, we set
\begin{align}
\phi _2 &= (y,z,u+c^{-1}z^2) \circ \phi _1 \notag \\
&= \left(y+Q(v_1,w_1),\frac{v_1}{x}, \frac{cw_1+v_1^2}{cx^2}\right) \notag\\
&= \left(y+Q(v_1,w_1),\frac{v_1}{x}, c^{-1}p_1-\frac{yw_1}{cx}\right) \label{phi2}
\end{align}
with the last equality following from \eqref{x2p1}.  For the next step, we require the following:
\begin{claim}\label{claimG}
For any $G \in \IC[x]^{[2]}$, $G(v_1,-c^{-1}v_1^2) \equiv G(v_1,w_1)+xyG^\prime(v_1,w_1) \pmod{x^2}$ for some $G^\prime \in \IC[x]^{[2]}$.
\end{claim}
\begin{proof}
This is a straightforward computation, appealing to \eqref{x2p1}.  Indeed, note that 
\begin{equation*}
-c^{-1}v_1^2=w_1+c^{-1}x(yw_1-xp_1)
\end{equation*}
We thus compute, applying Taylor's formula at the second step,
\begin{align*}
G(v_1,-c^{-1}v_1^2) &\equiv G(v_1,w_1+c^{-1}xyw_1) & \pmod{x^2} \\
&\equiv G(v_1,w_1)+c^{-1}xyw_1\del{G}{w_1}(v_1,w_1) & \pmod{x^2}
\end{align*}
Setting $G^\prime = c^{-1}w_1\del{G}{w_1}(v_1,w_1)$ yields the claim.
\end{proof}
\begin{claim}\label{claimH}
For any $H \in \IC[x]^{[3]}, G\in \IC[x]^{[2]}$, there exists $H^\prime \in \IC[x]^{[3]}$ such that
\begin{equation}\label{eqH}
H(y+G(v_1,w_1)-G(v_1,-c^{-1}v_1^2),v_1,-c^{-1}v_1^2) \equiv H(y,v_1,w_1)+xH^\prime(y,v_1,w_1) \pmod{x^2}
\end{equation}
Moreover, if $x|H$, then $x|H^\prime$.
\end{claim}
\begin{proof}
Let 
\begin{equation*}
L:=H(y+G(v_1,w_1)-G(v_1,-c^{-1}v_1^2),v_1,-c^{-1}v_1^2)
\end{equation*}
First, apply Claim \ref{claimG} to $G$, obtaining $L=H(y+xyG^\prime(v_1,w_1)+x^2T_1,v_1,-c^{-1}v_1^2)$ for some $T_1 \in \IC[x]^{[3]}$.  Next, apply Taylor's formula in the first component, obtaining for some $H^\prime \in \IC[x][y,v_1,w_1]$ and $T_2 \in \IC[x]^{[3]}$, 
\begin{align*}
L&=H(y,v_1,-c^{-1}v_1^2)+xH^\prime(y,v_1,w_1)+x^2T_2 \\
&=H(y,v_1,w_1+c^{-1}xyw_1-c^{-1}x^2p_1)+xH^\prime(y,v_1,w_1)+x^2T_2 
\end{align*}
with the second equality following from \eqref{x2p1}.  We now apply Taylor's formula in the third component, obtaining
\begin{align*}
L&\equiv H(y,v_1,w_1)+xH^{\prime \prime} (y,v_1,w_1) & \pmod{x^2}
\end{align*}
This is precisely the desired claim.
\end{proof}
\begin{claim}Suppose $\phi \in \GA_3(\IC[x,x^{-1}])$ is of the form
\begin{equation*}
\phi = \left(y+F(v_1,w_1),\frac{v_1}{x},A+x^{r}B+x^{r-2}G(y,v_1,w_1)\right)
\end{equation*}
for some $r \in \IZ$, $A,B \in \IC[x]^{[3]}$, $F \in \IC[x][v_1,w_1]$, and $G \in \IC[x][y,v_1,w_1]$.  Then there exists $\tau \in \GA_3(\IC[x,x^{-1}])$ such that $\tau \circ \phi = (y+F(v_1,w_1),\frac{v_1}{x},A+x^rB^\prime)$ for some $B^\prime \in \IC[x]^{[3]}$.
\end{claim}
\begin{proof}
Define 
\begin{equation*}
\tau = \left(y,z,u-x^{r-2}G\left(y-F(xz,-c^{-1}(xz)^2), xz,-c^{-1}(xz)^2\right)\right)
\end{equation*}
Now compute
\begin{align*}
\tau \circ \phi = \big(y&+F(v_1,w_1),\frac{v_1}{x}, A+x^rB+\\
&x^{r-2}\left(G(y,v_1,w_1)-G\left(y+F(v_1,w_1)-F(v_1,-c^{-1}v_1^2),v_1,-c^{-1}v_1^2\right)\right) \big) 
\end{align*}
Applying Claim \ref{claimH}, we obtain
\begin{equation*}
\tau \circ \phi = \left(y+F(v_1,w_1),\frac{v_1}{x}, A+x^rB^\prime+x^{r-1}G^\prime(y,v_1,w_1)\right)
\end{equation*}
Note that, by Claim \ref{claimH}, if $x|G$, then $x|G^\prime$.

Now we can apply an induction step (replacing $B$ with $B^\prime$ and $G$ with $xG^\prime$) to obtain the desired result.
\end{proof}
Now, apply the preceding claim to $\phi _2$ \eqref{phi2} to obtain
\begin{align*}
\phi _3 := \left(y+Q(v_1,w_1),\frac{v_1}{x},c^{-1}p_1+xB\right)
\end{align*}
for some $B \in \IC[x]^{[3]}$.  Finally, set 
\begin{align*}
\phi _4 &= (y,z-\frac{c^2u}{x},u) \circ \phi _3 \\
&=\left(y+Q(v_1,w_1), \frac{v_1}{x}-\frac{c}{x}(p_1+xcB),c^{-1}p_1+xB\right) \\
&=\left(y+Q(v_1,w_1), z+yp_1-c^2B,c^{-1}p_1+xB\right) 
\end{align*}
with the last equality coming from \eqref{pvw1}.  Thus we have $\phi _4 \in \MA_3(\IC[x]) \cap \GA_3(\IC[x,x^{-1}])$, hence $\phi _4 \in \GA_3(\IC[x])$ and $\phi_4(y)=y+Q(v_1,w_1)$, i.e. $y+Q(v_1,w_1)$ is a coordinate as required.
\end{proof}

Before proceeding, we remark that Corollary \ref{arno} raises an interesting question:

\begin{conjecture}
Let $A$ be a $\IQ$ algebra, $a, b \in A^{[n]}$, and $g \in A^{[n]}[y]$.  Then $y+g(ay+b)$ is a coordinate if and only if $y+ag(y)-b$ is.
\end{conjecture}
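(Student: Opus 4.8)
The plan is to recognize that this conjecture is essentially asking whether the change of variables $y \mapsto ay+b$ used in the Lemma preserves coordinate-ness, rather than just the weaker property of being a hyperplane. One direction is trivially true and worth noting first: if $y+ag(y)-b$ is a coordinate, then since the Lemma's isomorphism chain shows $A^{[n]}[y]/(y+ag(y)-b) \cong A^{[n]}[y]/(y+g(ay+b))$ as $A^{[n]}$-algebras, and a coordinate has quotient $\cong A^{[n]}[y]/(y-c)$ for a coordinate $c$ after a suitable automorphism... but this already runs into the difficulty that being a hyperplane does not imply being a coordinate (that is precisely Conjecture \ref{AS}). So the honest statement is that neither direction is obvious, and I would approach this as a genuine conjecture rather than a theorem with a hidden short proof.

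My main line of attack would be to try to lift the ring isomorphism from the Lemma to an automorphism of the ambient polynomial ring $A^{[n]}[y] = A^{[n+1]}$. Concretely, I would introduce $B = A^{[n]}$ and try to construct $\Phi \in \Aut_B B[y]$ (or more ambitiously $\Phi \in \Aut_A A[y]$, but $B$-linearity is the natural first target) carrying $y + ag(y) - b$ to $y + g(ay+b)$ up to a unit and an additive constant. The substitution $y \mapsto ay+b$ is an endomorphism of $B[y]$ but not an automorphism unless $a \in B^{\times}$, which is the crux of the trouble: the Lemma works at the level of quotient rings precisely because one is allowed to kill the ideal generated by $a(y-t)$, but there is no corresponding move at the level of the full polynomial ring. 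So I would instead look for a stable version: pass to $B[y][z]$, use $z$ to absorb the failure of $y \mapsto ay+b$ to be invertible (e.g. via the elementary automorphism $z \mapsto z + (\text{something})$ followed by a triangular map that makes $(ay+b, z)$ part of a coordinate system when augmented), and hope the argument shows that stable coordinate-ness transfers. Then one would appeal to Conjecture \ref{stable} to descend, meaning the present conjecture would be proved conditionally on Conjecture \ref{stable}, which is likely the intended status given the surrounding discussion.

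The key steps, in order, would be: (1) reduce to $b=0$ by the translation $y \mapsto y+b'$ for suitable $b'$ (one must check the translation interacts correctly with both forms — this may not fully eliminate $b$, in which case keep it); (2) set up the candidate automorphism of $A^{[n]}[y][z]$ built from the two elementary maps $z \mapsto z + y$-terms and $y \mapsto y + $ (multiple of $z$), composed with the substitution-like map, and verify it has Jacobian determinant a unit and is genuinely invertible over $A^{[n]}$; (3) track where $y + ag(y) - b$ goes under this map and show it equals (a unit times) $y + g(ay+b)$ plus a coordinate of $A^{[n]}[z]$, so that being a coordinate is preserved after adding the one variable $z$; (4) invoke Conjecture \ref{stable} to remove the extra variable. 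I expect step (2)–(3) to be the main obstacle: making the non-invertible substitution $y \mapsto ay+b$ into an honest automorphism by trading in one extra variable is exactly the kind of maneuver that works for hyperplanes (as in Corollary \ref{arno} and Theorem \ref{hyperplane}) but is known to be subtle for coordinates, and it is entirely possible that no such clean stabilization exists, in which case the conjecture should simply be left open with the remark that it would follow from Conjecture \ref{stable} together with a stable version of Corollary \ref{arno} — and establishing that stable version is itself the hard part.
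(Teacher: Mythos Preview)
The statement is presented in the paper as a \emph{conjecture}, not a theorem; the paper offers no proof and leaves it open. You correctly diagnose this. The only remark the paper makes is that the conjecture is weaker than the Abhyankar--Sathaye Embedding Conjecture (Conjecture~\ref{AS}): by Corollary~\ref{arno} the two polynomials $y+ag(y)-b$ and $y+g(ay+b)$ are hyperplanes simultaneously, so if hyperplanes are coordinates then both are coordinates simultaneously. That is the full extent of the paper's treatment.

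Your alternative route---attempt to stabilize the substitution $y\mapsto ay+b$ into an honest automorphism of $A^{[n]}[y,z]$, transfer coordinate-ness $1$-stably, then invoke Conjecture~\ref{stable}---is a reasonable line of speculation, and you are right that the obstacle lies in steps (2)--(3). But note that you have not actually carried it out: the existence of such a stabilizing automorphism is precisely the hard content, and as you yourself concede at the end, establishing that stable version of Corollary~\ref{arno} may be no easier than the conjecture itself. So what you have written is not a proof (conditional or otherwise) but an outline of where one might look, with the key step left entirely open. That is appropriate for a conjecture, but it should not be labeled a proof proposal.
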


This is weaker than the Embedding Conjecture, but a proof of this conjecture would make every \Vtype polynomial a coordinate.

The next thing we prove is a generalization of a fact about $f_1$ from \cite{EMV}.  As described there, the motivation is the following well known lemma:

\begin{lemma}
Let $A$ be a ring, and $f \in A^{[n]}$.  Then the following are equivalent
\begin{enumerate}
\item $f$ is a coordinate
\item $A^{[n]} \cong _{A[f]} A[f]^{[n-1]}$
\item $A[c]^{[n]}/(f-c) \cong _{A[c]} A[c]^{[n-1]}$, where $c$ is an additional indeterminate
\end{enumerate}
\end{lemma}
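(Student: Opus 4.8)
The plan is to prove the three equivalences by a cycle of implications, $(1)\Rightarrow(2)\Rightarrow(3)\Rightarrow(1)$, all of which are essentially standard "base change" manipulations with polynomial rings. For $(1)\Rightarrow(2)$: if $f$ is a coordinate, choose $f_2,\dots,f_n$ with $A[f,f_2,\dots,f_n]=A^{[n]}$; then $A^{[n]}=A[f][f_2,\dots,f_n]$ is a polynomial ring in $n-1$ variables over the subring $A[f]$, which is itself a polynomial ring in one variable over $A$ (since $f$, being part of a coordinate system, is algebraically independent over $A$). This is the assertion $A^{[n]}\cong_{A[f]}A[f]^{[n-1]}$.

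For $(2)\Rightarrow(3)$: introduce a fresh indeterminate $c$ and base change the isomorphism $A^{[n]}\cong_{A[f]}A[f]^{[n-1]}$ along the $A$-algebra map $A[f]\to A[c]$ sending $f\mapsto c$. Concretely, $A[c]\otimes_{A[f]}A^{[n]}\cong A[c]^{[n-1]}$ as $A[c]$-algebras, and one identifies the left-hand side with $A[c]^{[n]}/(f-c)$: writing $A^{[n]}=A[f]^{[n-1]}$ makes $A[c]\otimes_{A[f]}A^{[n]}=A[c]\otimes_{A[f]}A[f]^{[n-1]}=A[c]^{[n-1]}$ already, but keeping $A^{[n]}$ in its original presentation $A[y_1,\dots,y_n]$ gives $A[c]\otimes_{A[f]}A^{[n]}=A[c][y_1,\dots,y_n]/(f-c)=A[c]^{[n]}/(f-c)$. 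So the tensor identity reads $A[c]^{[n]}/(f-c)\cong_{A[c]}A[c]^{[n-1]}$.

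For $(3)\Rightarrow(1)$: this is the direction that actually needs a small argument rather than bookkeeping. Given $A[c]^{[n]}/(f-c)\cong_{A[c]}A[c]^{[n-1]}$, I would substitute $c=f$—more precisely, apply the $A$-algebra homomorphism $A[c]\to A^{[n]}=A[y_1,\dots,y_n]$ sending $c\mapsto f$ and $y_i\mapsto y_i$. Under this map the relation $f-c$ maps to $0$, so the map factors through $A[c]^{[n]}/(f-c)$, giving an $A$-algebra homomorphism $A[c]^{[n]}/(f-c)\to A^{[n]}$. One checks this is an isomorphism (it is surjective since the $y_i$ are hit, and injective by a dimension/freeness count, or by exhibiting the inverse $y_i\mapsto y_i$, $c$ "=" $f$ directly). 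Transporting the given isomorphism $A[c]^{[n]}/(f-c)\cong_{A[c]}A[c]^{[n-1]}$ through this, and noting that the composite $A[c]\to A[c]^{[n]}/(f-c)\xrightarrow{\sim}A^{[n]}$ sends $c\mapsto f$, we obtain $A^{[n]}\cong_{A[f]}A[f]^{[n-1]}$, from which a coordinate system for $f$ is read off: the $n-1$ polynomial generators over $A[f]$, together with $f$ itself, generate $A^{[n]}$ freely over $A$.

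The main obstacle—really the only nontrivial point—is justifying that the "substitute $c=f$" map $A[c]^{[n]}/(f-c)\to A^{[n]}$ is an isomorphism; everything else is formal. The cleanest way to handle it is to observe that $f-c$ is a monic linear polynomial in $c$ over $A^{[n]}[c]=A^{[n]}\otimes_A A[c]$, so $A[c]^{[n]}/(f-c)=A^{[n]}[c]/(c-f)\cong A^{[n]}$ canonically, the isomorphism being exactly evaluation at $c=f$. I would present this as the key line and leave the remaining verifications (that the various $A$-, $A[f]$-, $A[c]$-algebra structures match up under the identifications) as routine.
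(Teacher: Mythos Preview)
The paper does not prove this lemma; it is simply stated as ``well known'' and invoked without argument. Your proof is essentially correct and is the standard one.

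One small point worth tightening: in $(2)\Rightarrow(3)$ you use an $A$-algebra map $A[f]\to A[c]$ sending $f\mapsto c$, and in the last step of $(3)\Rightarrow(1)$ you assert that $f$ together with the $n-1$ generators over $A[f]$ generate $A^{[n]}$ \emph{freely} over $A$. Both of these presuppose that $f$ is transcendental over $A$, which has not been justified from (2) alone. It does follow, however. Your own $(3)\Rightarrow(1)$ argument already contains it implicitly: the composite $A[c]\hookrightarrow A[c]^{[n-1]}\cong A[c]^{[n]}/(f-c)\cong A^{[n]}$ sends $c\mapsto f$ and is injective, so $A[f]\cong A^{[1]}$. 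Alternatively, from (2) one obtains a surjective $A$-algebra endomorphism $A[X_1,\dots,X_n]\to A^{[n]}$ sending $X_1\mapsto f$, $X_i\mapsto g_i$; any surjective $A$-algebra endomorphism of $A^{[n]}$ is an isomorphism (reduce to a finitely generated, hence Noetherian, subring of $A$ containing all relevant coefficients). With this point made explicit, your cycle $(1)\Rightarrow(2)\Rightarrow(3)\Rightarrow(1)$ is complete.
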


In light of this, we may observe that

\begin{corollary}
Let $P(x,c) \in \IC[x,c]$, and set $\tilde{R}=\IC[x,c]/(P)$.  Let $f$ be a \Vtype polynomial.  If $f$ is a coordinate of $\IC[x][y,z,u]$, then $f - c$ is a $\tilde{R}$-hyperplane; i.e. $\tilde{R}[y,z,u]/(f-c) \cong_{\tilde{R}} \tilde{R}^{[2]}$.
\end{corollary}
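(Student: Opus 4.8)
The plan is to deduce this corollary directly from the preceding lemma applied to the ring $A = R = \IC[x]$ and $n = 3$. Suppose $f = y + xQ$ is a \Vtype polynomial that is a coordinate of $R[y,z,u] = R^{[3]}$. By the equivalence (1) $\Leftrightarrow$ (3) in the lemma (with $c$ the additional indeterminate, so $A[c] = R[c]$), we obtain
\begin{equation*}
R[c][y,z,u]/(f - c) \cong_{R[c]} R[c]^{[2]}.
\end{equation*}

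Next I would base-change along the quotient map $R[c] \to R[c]/(P) = \tilde R$. Tensoring the isomorphism above with $\tilde R$ over $R[c]$ gives
\begin{equation*}
\tilde R[y,z,u]/(f - c) \cong_{\tilde R} \tilde R^{[2]},
\end{equation*}
since $\big(R[c][y,z,u]/(f-c)\big) \otimes_{R[c]} \tilde R \cong \tilde R[y,z,u]/(f-c)$ (the relation $f - c$ has coefficients already in $R[y,z,u] \subset R[c][y,z,u]$, and quotients commute with base change), and $R[c]^{[2]} \otimes_{R[c]} \tilde R \cong \tilde R^{[2]}$. This is exactly the assertion that $f - c$ is an $\tilde R$-hyperplane.

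There is essentially no obstacle here: the only point requiring a word of care is checking that the two base-change identifications are $\tilde R$-linear isomorphisms of the appropriate polynomial-quotient rings, which is routine. One could equally phrase the argument without invoking the lemma, by noting that an $R$-automorphism of $R[y,z,u]$ carrying $y$ to $f$ induces, after adjoining $c$ and reducing mod $P$, an $\tilde R$-isomorphism $\tilde R[y,z,u]/(f-c) \cong \tilde R[y]/(y) \otimes \tilde R[z,u] \cong \tilde R^{[2]}$; but the lemma makes this transparent. I would present it in the two-step form above.
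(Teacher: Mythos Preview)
Your proposal is correct and matches the paper's approach. The paper does not give an explicit proof of this corollary; it simply writes ``In light of this, we may observe that\ldots'', indicating that the statement follows immediately from the preceding lemma (the equivalence of $f$ being a coordinate with $A[c]^{[n]}/(f-c) \cong_{A[c]} A[c]^{[n-1]}$) together with the obvious base change along $R[c] \to \tilde R$, which is exactly what you have written out.
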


So if we found one such $P$ for which a \Vtype polynomial $f$ is not a $\tilde{R}$-hyperplane, then it could not be a coordinate.

\begin{theorem}\label{cusp}
For any of the following polynomials $P_i \in \IC[x,c]$ and any \Vtype polynomial $f=y+xQ$, $f-c$ is a $\tilde{R_i}$-hyperplane (where $\tilde{R_i} := \IC[x,c]/(P_i)$).
\begin{enumerate}
\item $P_1 := x-x_0$ (where $x_0 \in \IC$)
\item $P_2 := c-c_0$ (where $c_0 \in \IC$)
\item $P_3 := x^2-c^3$
\end{enumerate}
\end{theorem}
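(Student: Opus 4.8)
The plan is to handle each of the three polynomials $P_i$ separately, reducing each case to a situation where we can either invoke an already-established coordinate result or exhibit an explicit automorphism.

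For $P_1 := x - x_0$: here $\tilde{R_1} = R[c]/(x-x_0) = \IC[c]$, and the substitution $x = x_0$ turns $f - c$ into $y + x_0 Q(v_0', w_0') - c$ where $v_0', w_0'$ are the specializations of $v, w$ at $x = x_0$, which are polynomials in $\IC[c][y,z,u]$. If $x_0 \neq 0$, then over $\tilde{R_1} = \IC[c]$ the element $x_0$ is a unit, and the argument of Theorem~\ref{VTypeResidual} (via $\psi$, which is defined over $S$) shows directly that $y + x_0 Q$ is a coordinate of $\IC[c][y,z,u]$, hence $f-c$ is a coordinate and a fortiori a hyperplane. If $x_0 = 0$, then $f - c$ becomes $y - c$, which is visibly a coordinate. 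Either way, we get the (stronger) conclusion that $f-c$ is a $\tilde{R_1}$-coordinate.

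For $P_2 := c - c_0$: here $\tilde{R_2} = R[c]/(c - c_0) = \IC[x] = R$, and $f - c$ becomes $f - c_0 = (y - c_0) + xQ$. Making the change of variable $y \mapsto y + c_0$ (an $R$-automorphism of $R[y,z,u]$), this is again a \Vtype polynomial in the shifted variable — more precisely, $v$ and $w$ are unchanged up to the shift, so $f - c_0$ is $R$-equivalent to a \Vtype polynomial, which is a hyperplane by Theorem~\ref{hyperplane}. (Alternatively, one observes $f - c_0$ is itself of the form $y' + xQ$ with $y' = y - c_0$ and $Q \in R[v,w]$, and Theorem~\ref{hyperplane} applies as stated after noting its proof only uses that the linear term is a single variable.)

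The main obstacle is $P_3 := x^2 - c^3$, where $\tilde{R_3} = R[c]/(x^2 - c^3)$ is the coordinate ring of the cuspidal cubic — not a polynomial ring — so neither localization at $x$ nor a linear substitution is available verbatim. My plan here is to mimic the proof of Theorem~\ref{hyperplane}: apply Corollary~\ref{arno} to reduce to showing $y + xQ_0$ is a coordinate of $\tilde{R_3}[y,z,u]$ for $Q_0 \in \tilde{R_3}[v_0,w_0]$ with the usual $p_0 = xyu + z^2$, $v_0 = z + yp_0$, $w_0 = xu - 2v_0 p_0 + y p_0^2$ relation. One then forms $\theta_0 = (y + xQ_0, v_0, w_0)$, which lies in $\Aut$ after inverting $x$ (using $\psi_0 = \exp(\tfrac{p_0}{x}D_0)$ as before), has Jacobian $1$, and the only thing to check is that $\theta_0 \in \End_{\tilde{R_3}} \tilde{R_3}[y,z,u]$ — i.e. that $\theta_0(z)$ and $\theta_0(u)$ have no negative powers of $x$. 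The computation is formally identical to the one in the proof of Theorem~\ref{hyperplane} ($\theta_0(p) = p + xw_0 Q_0$, so $\theta_0(z) \equiv z$ and $\theta_0(u) \equiv u \pmod x$), and crucially it never divides by $c$ — it only uses that $x \mid (x w_0 Q_0)$-type expressions — so it goes through verbatim over the base ring $\tilde{R_3}$, giving that $y + xQ_0$ is a $\tilde{R_3}$-coordinate and hence a hyperplane. I expect the subtlety worth double-checking is that Corollary~\ref{arno} is applied with $A = \tilde{R_3}$ (a non-domain? no — $\tilde{R_3}$ is a domain, the cusp being irreducible) and with $a = x$, $b = 0$, $g$ built from $Q_0$; since the Corollary holds for arbitrary commutative rings $A$, this is legitimate, and the "hard part" is really just verifying that no step of the Theorem~\ref{hyperplane} argument secretly used that $R$ was a polynomial ring or a UFD — it does not.
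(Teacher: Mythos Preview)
Your argument for $P_1$ is fine and matches the paper. The other two cases, however, both have genuine gaps.

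\medskip

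\textbf{Case $P_2$.} The shift $y\mapsto y+c_0$ does \emph{not} turn $f-c_0=(y-c_0)+xQ$ into a \Vtype polynomial. The polynomials $v=xz+yp$ and $w=x^2u-2xzp-yp^2$ depend on $y$; under $y\mapsto y'+c_0$ they become different elements $v'\neq v$, $w'\neq w$ of $R[y',z,u]$, and there is no reason for $Q(v,w)$ to lie in $R[v',w']$. (For instance, already for $f_1$ one has $v-v'=c_0(2y'u+c_0u+z^2)\notin R[v',w']$.) So neither of your two formulations reduces to Theorem~\ref{hyperplane}. The paper instead applies Corollary~\ref{arno} with $b=c_0$ (not $b=0$), producing $y+xQ_1$ with $Q_1\in R[v_0,w_0]$; it then observes that $(y+xQ_1,v_0,w_0)$ is the conjugate of the already-known automorphism $(y+xQ_0,v_0,w_0)$ by $(y+\tfrac{c_0}{x},v_0,w_0)$, and checks it lies in $\End_R R[y,z,u]$.

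\medskip

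\textbf{Case $P_3$.} You invoke Corollary~\ref{arno} with $a=x$, $b=0$, but the polynomial in question is $f-c$, so the correct choice is $b=c$. With $b=c$ the substitution $y\mapsto xy+c$ gives $p\mapsto (xy+c)u+z^2$, which is \emph{not} congruent to $z^2$ modulo $x$ (since $c\neq 0$ in $\tilde R_3$), and hence $v,w$ do not specialize to $xv_0,xw_0$. The computation is therefore not ``formally identical'' to that of Theorem~\ref{hyperplane}; the candidate $\theta_0=(y+xQ_0,v_0,w_0)$ simply is not the right object here. The paper's route is substantially different: it uses $\tilde R_3\cong\IC[t^2,t^3]\subset\IC[t]$ together with the main theorem of \cite{EMV} to reduce to showing the image of $f-c$ is a $\IC[t]$-hyperplane; then applies Corollary~\ref{arno} with $a=b=t^2$; then invokes a residual-coordinate lemma from \cite{KVZ} (an Abhyankar--Moh--Suzuki consequence, stated as Lemma~\ref{KVZlemma}) to peel off a factor $t^2$ from $u$; and finally lands on $y+x^3Q$ conjugated by $(y-1,z,u)$, which is a coordinate by Theorem~\ref{coordinate}. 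None of these ingredients appears in your sketch, and the reduction to $\IC[t]$ is essential precisely because over $\tilde R_3$ neither $x$ nor $c$ is a unit and your direct $\End$-check cannot be carried out.
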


\begin{proof}
For (1), first suppose $x_0=0$.  Then $f-c\equiv y-c \pmod x$ is obviously a coordinate, hence a hyperplane.  If $x_0 \in \IC^*$, then since $f-c$ is a $\IC[x,x^{-1}]$ coordinate, $\bar{f}-c$ is a coordinate when we go mod $x-x_0$, and hence is a hyperplane.
%
For part (2), note that $\IC[x,c]^{[3]}/(c-c_0,f-c) \cong \IC[x]^{[3]}/(f-c_0)$.  But by Theorem \ref{hyperplanefibration}, we have $\IC[x]^{[3]}/(f-c_0) \cong \IC[x]^{[2]}$ giving the desired result.

For (3), we follow the approach sketched in \cite{EMV} for the \Venereau polynomials; this requires Corollary \ref{arno} and the following (Corollary 1.31 from \cite{KVZ}), which is an application of the Abhyankar-Moh-Suzuki theorem:

\begin{lemma}\label{KVZlemma}
Let $\alpha(y,z,u) \in \IC[t][y,z,u]$ be a $\IC[t]$-coordinate, and let $\beta \in \IC[t]$.  If $\alpha(y,z,\beta u)$ is a $\IC[t]$-residual coordinate, then it is also a $\IC[t]$-coordinate, and hence a $\IC[t]$-hyperplane.
\end{lemma}

The key idea from \cite{EMV} is that $\tilde{R_3} \cong \IC[t^2,t^3]$, and using the main theorem from that paper, it suffices to show the image of $f-c$ in $\IC[t^2,t^3]$ is a $\IC[t]$-hyperplane.  Note that the image of $f-c$ is 
\begin{equation*}
f_0 = y+t^3Q(t^3z+yp, t^6u-2t^3zp-yp^2)-t^2
\end{equation*}
As in the proof of Theorem \ref{hyperplane}, we apply Corollary \ref{arno} with $a=b=t^2$; hence, it suffices to show 
\begin{equation*}
f_1 = y+t^3Q(tz+(y+1)p_1, t^4u-2tzp_1-(y+1)p_1^2)
\end{equation*}
is a coordinate over $\IC[t]$, where $p_1=t^2(y+1)u+z^2$.  In order to apply Lemma \ref{KVZlemma}, we first establish that $f_1$ is a strongly $t$-residual coordinate, and hence a residual coordinate over $\IC[t]$.  Clearly it is a coordinate modulo $t$; to see it is a coordinate over $\IC[t,t^{-1}]$, define $D_1 = t^2(y+1)\delz-2z\delu$, and set $\phi _1 = (y+t^3Q(tz,t^4u),z,u) \circ \exp(\frac{p_1}{t^3}D_1)$.  One may quickly check that $\phi_1(y)=f_1$.   

Now, taking $\beta=t^2$ in Lemma \ref{KVZlemma}, it suffices to show that $y+t^3Q(tz+(y+1)p_2,t^2u-2tzp_2-yp_2^2)$ is a coordinate, where $p_2=(y+1)u+z^2$.  But aside from replacing $t$ by $x$, we recognize this as $y+x^3Q(v,w)$ conjugated by $(y-1,z,u)$, so we are done since $y+x^3Q(v,w)$ is a coordinate by Theorem \ref{coordinate}.
\end{proof}

By Theorem \ref{Asanuma}, all \Vtype polynomials are stable coordinates.  However, no bound is given on the number of additional variables needed.  It was previously shown by Freudenburg \cite{Freud} that for $f_1$ and $f_2$, only one additional variable is needed; it turns out one is sufficient for all \Vtype polynomials.

\begin{theorem}\label{stablytamestable}
Let $f=y+xQ$, $Q \in \IC[x][v,w]$ be a \Vtype polynomial.  Then there exists $\phi \in \GA_4(\IC[x])$ with $\phi(y)=f$ and $\phi$ stably tame.
\end{theorem}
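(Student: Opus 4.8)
The plan is to build an automorphism $\phi \in \Aut_R R[y,z,u,t]$ with $\phi(y) = f$ satisfying the hypotheses of Theorem \ref{BEWtheorem}, and then conclude. So I want $\phi$ with $J\phi = 1$, with $\phi$ tame in $\Aut_S S[y,z,u,t]$, and with $\bar\phi$ (the reduction modulo $x$) a composition of elementary automorphisms of $\IC[y,z,u,t]$; as in the proof that the automorphism (\ref{Varphi}) is stably tame, it suffices to establish these after one further stabilization, since then Theorem \ref{BEWtheorem} applied to $\phi\times\mathrm{id}$ gives $\phi$ stably tame. I would construct $\phi$ as a composition of automorphisms that are each defined over $S$, of Jacobian $1$, and tame over $S$; then the first two conditions come for free, and $\bar\phi$ is read off from the composite, which the construction will arrange to have all four coordinate functions in $R[y,z,u,t]$.

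For the construction, start from the elementary automorphism $(y+xQ,v,w)$ of $S[y,v,w]=S[y,z,u]$: it sends $y$ to $f$, it is tame over $S$ with Jacobian $1$, and it equals $\exp\!\big(\tfrac{Q}{x^2}d\big)$, where $d=J(v,w,\cdot)\in\LND_R R[y,z,u]$ is the derivation from $\theta_n=\exp(x^{n-3}vd)$ (so that $d(y)=x^3$, $\ker d=R[v,w]\ni Q$). Expressed in the $(y,z,u)$-coordinates, however, the $z$- and $u$-components of this automorphism acquire poles along $x=0$ — precisely the obstruction that keeps $f_1,f_2$ from being visibly $\IC[x]$-coordinates. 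The extra variable $t$ is what lets these poles be cleared: working in $S[y,z,u,t]$, I would sandwich $\exp\!\big(\tfrac{Q}{x^2}d\big)$ (extended by $t\mapsto t$) between elementary $S$-automorphisms supported on the $t$-slot (of the form $t\mapsto t+(\text{pole term})$, together with a compensating one) and a triangular correction in the $z$-slot patterned on the correction term in (\ref{Varphi}) — now permitted to involve $t$, which is exactly what removes the restriction $Q_2\in\IC[v^2,w]$ of Theorem \ref{coordinate}. A direct computation, in the spirit of the proofs of Proposition \ref{f3}, Theorem \ref{f2} and Theorem \ref{coordinate}, should show that after this sandwiching all four coordinate functions of the composite $\phi$ lie in $R[y,z,u,t]$ and $\phi(y)=f$; since $J\phi=1$, this forces $\phi\in\Aut_R R[y,z,u,t]$.

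I expect the pole-clearing step to be the main obstacle: the $t$-corrections and the triangular $z$-correction must be chosen delicately enough that every negative power of $x$ cancels in the composite. The remaining points are routine. By construction $\phi$ is tame over $S$ (in the $(y,v,w,t)$-coordinates it is a composition of elementaries, and $\psi$ becomes tame over $S$ upon adjoining the single variable $t$, so $\psi\times\mathrm{id}_t$ is tame over $S$), and $J\phi=1$. Modulo $x$, the elementary $t$- and $z$-factors reduce to triangular automorphisms, and $\exp\!\big(\tfrac{Q}{x^2}d\big)$ reduces — as (\ref{Varphi}) reduces to a Nagata-type automorphism, noting $Q(v,w)\equiv Q(yp,-yp^2)\in\IC[y,p]$ modulo $x$ — to an automorphism of the same Nagata type; hence $\bar\phi$ is a Nagata-type automorphism, which is a composition of elementaries over $\IC$ (at worst after one stabilization). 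Theorem \ref{BEWtheorem}, applied to $\phi$ or a stabilization of it, then gives that $\phi$ is stably tame, which with $\phi(y)=f$ proves the theorem.
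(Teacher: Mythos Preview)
Your strategic outline is exactly the paper's: build $\phi$ inside $\Aut_R R[y,v,w,t]\subset\Aut_S S[y,z,u,t]$ by composing $(y+xQ,v,w,t)$ with correction factors, verify the composite lands in $\Aut_R R[y,z,u,t]$, and then invoke Theorem~\ref{BEWtheorem}. But what you have written is a plan, not a proof: the step you yourself flag as ``the main obstacle'' is the entire content of the theorem, and you have not carried it out. The paper's construction is
\[
\phi \;=\; (y,v,w,\,t-x(yw+v^2)) \;\circ\; \phi' \;\circ\; (y+xQ,v,w,t) \;\circ\; (y,v,w,\,t+x(yw+v^2)),
\]
and two features of it do not match your description. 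First, the outer $t$-moves are $t\mapsto t\pm x(yw+v^2)=t\pm x^3p$, which are already defined over $R$; they are not ``pole terms'' to be cancelled later. Second, the essential correction $\phi'$ fixes $y$ and $t$ and modifies \emph{both} $v$ and $w$ by polynomials in $y,\,xt,\,w$ whose coefficients are built, monomial by monomial, from the expansion $Q=\sum\alpha_{a,b,r}v^a(-w)^bx^r$ via a rather delicate system of constants. A single triangular ``$z$-slot'' correction patterned on~(\ref{Varphi}) does not suffice; the restriction $Q_2\in\IC[v^2,w]$ in Theorem~\ref{coordinate} is there precisely because without the extra variable one cannot adjust $w$.

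The verification that this $\phi$ has no poles is then the substance of the argument: writing $\phi(x^2p)\equiv P_{-1}+xP_0+x^2P_1\pmod{x^3}$, one must prove two lemmas, $P_{-1}=P_0=0$ and $P_1\in\IC[y,p]$, and these hold only because of the specific coefficient choices in $\phi'$. Once that is done, one reads off $\bar\phi$ as a Nagata-type automorphism composed with an elementary $t$-move (so your expectation about $\bar\phi$ is correct), and Theorem~\ref{BEWtheorem} finishes as you say. In short: the framework and the endgame are right, but the proof cannot be completed without actually writing down $\phi'$ and checking the cancellations --- and that is precisely the several pages of coefficient-matching that the paper supplies.
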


A proof of this can be found in \cite{Mythesis}, and will appear as part of a more general method in \cite{StronglyResidual}.  We remark that in the case of the \Venereau polynomial, this produces a different coordinate system than Freudenburg's.

\subsection*{Acknowledgements}
The author would like to thank his advisor, David Wright, for his numerous suggestions and helpful discussions.  The author would also like to thank Arno van den Essen for his feedback on an early manuscript, and in particular, pointing out lemma \ref{arnolemma}.

\bibliographystyle{hplain}
\bibliography{ref}

\end{document}